\titleformat{\section}[hang]
{\normalfont\Large\bfseries}
{\thesection.}{0.5em}{}
\titlespacing*{\section}{0pc}{2pc}{0.25pc}
\titleformat{\subsection}[runin]
{\normalfont\large\bfseries}
{\thesubsection}{0.5em}{}
\titlespacing{\subsection}{0pc}{1.5pc}{0.5pc}
\newcommand{\Aut}{\text{Aut}}
\newcommand{\N}{\mathbb{N}}
\newcommand{\Z}{\mathbb{Z}}
\newcommand{\R}{\mathbb{R}}
\newcommand{\C}{\mathbb{C}}
\renewcommand{\H}{\mathcal{H}}
\newcommand{\vphi}{\varphi}
\newcommand{\eig}{\text{eig}}
\newcommand{\Tr}{\text{Tr}}
\newcommand{\tr}{\text{tr}}
\DeclareMathOperator{\conv}{conv}
\newcommand{\<}{\left\langle}
\renewcommand{\>}{\right\rangle}
\renewcommand{\Re}[1]{\text{Re}\ #1}
\newcommand{\Ad}[1]{\text{Ad}\left(#1\right)}
\newcommand{\E}{\mathcal{E}}
\newcommand{\Sd}{\text{Sd}}
\newtheorem{thm}{Theorem}[section]
\newtheorem{thmalpha}{Theorem}
\newtheorem{coralpha}[thmalpha]{Corollary}
\newtheorem{lem}[thm]{Lemma}
\newtheorem*{lem*}{Lemma}
\theoremstyle{definition}
\title{\textbf{Uniqueness of extremal almost periodic states on the injective type $\mathrm{III}_1$ factor}}
\author{Michael Hartglass$^\circ$}
\address{$^\circ$Department of Mathematics and Computer Science, Santa Clara University\hfill \url{mhartglass@scu.edu}}
\author{Brent Nelson$^\bullet$}
\address{$^\bullet$Department of Mathematics, Michigan State University \hfill \url{brent@math.msu.edu}}
\date{}
\begin{document}

\maketitle

\begin{abstract}
Let $R_\infty$ denote the Araki--Woods factor---the unique separable injective type $\mathrm{III}_1$ factor. For extremal almost periodic states $\vphi, \psi\in (R_\infty)_*$, we show that if $\Delta_\vphi$ and $\Delta_\psi$ have the same point spectrum then $\psi = \vphi\circ \alpha$ for some $\alpha\in \Aut(R_\infty)$. Consequently, the extremal almost periodic states on $R_\infty$ are parameterized by countable dense subgroups of $\R_+$, up to precomposition by automorphisms. As an application, we show that KMS states for generalized gauge actions on Cuntz algebras agree (up to an automorphism) with tensor products of Powers states on their von Neumann completions.
\end{abstract}


\section*{Introduction}

Using approximations by completely positive maps, Haagerup provided a new proof in \cite{Haa89} of the uniqueness of the separable injective type $\mathrm{III}_\lambda$ factors for $0<\lambda<1$. This uniqueness was first established by Connes in his celebrated work \cite{Con76}, but a novel consequence of Haagerup's proof was the uniqueness of their $\frac{2\pi}{\log(1/\lambda)}$-periodic states. For $0 < \lambda \leq 1$ denote
    \begin{align}\label{eqn:def_of_Powers_factors}
        (R_\lambda,\vphi_\lambda) := \bigotimes_{n\in \N} (M_2(\C),\phi_\lambda)
    \end{align}
where
    \[
        \phi_\lambda(x) := \Tr\left[\left(\begin{array}{cc} \frac{1}{1+\lambda} & 0 \\ 0 & \frac{\lambda}{1+\lambda} \end{array}\right)x \right].
    \]
These are separable injective factors of type $\mathrm{III}_\lambda$ for $0<\lambda< 1$ (also known as \emph{Powers' factors} having been first studied in \cite{Pow67}), and type $\mathrm{II}_1$ for $\lambda=1$. The associated states are $\frac{2\pi}{\log(1/\lambda)}$-periodic for $0<\lambda<1$, while $\tau:=\vphi_1$ is of course the unique tracial state on $R:=R_1$. For $(N,\vphi)$ a separable injective type $\mathrm{III}_\lambda$ factor equipped with a $\frac{2\pi}{\log(1/\lambda)}$-periodic state and $0<\lambda <1$, Haagerup established a state-preserving isomorphism $(N,\vphi)\cong (R_\lambda,\vphi_\lambda)$, and consequently any pair of $\frac{2\pi}{\log(1/\lambda)}$-periodic states on $R_\lambda$ differ only by precomposition with an automorphism (see \cite[Theorem 6.1 and Corollary 6.5]{Haa89}). Haagerup also established the uniqueness of the separable injective type $\mathrm{III}_1$ factor in \cite{Haa87} using a reduction due to Connes in \cite{Con85}, and a second proof was published posthumously in \cite{Haa16} that more closely followed the approach in \cite{Haa89}. However, neither of these proofs provide a \emph{state preserving} isomorphism and consequently the analogue of \cite[Corollary 6.5]{Haa89} has been missing in the type $\mathrm{III}_1$ setting until now. In this article we prove such an analogue, where periodic states are replaced with so-called extremal almost periodic states. 

In \cite{Con74}, Connes defined an \emph{almost periodic} weight as a faithful normal semifinite weight $\vphi$ whose modular operator $\Delta_\vphi$ is diagonalizable:
    \[
        \Delta_\vphi = \sum_{\lambda\in \Sd(\vphi)} \lambda 1_{\{\lambda\}}(\Delta_\vphi),
    \]
where we denote the point spectrum of $\Delta_\vphi$ by $\Sd(\vphi)$. An \emph{extremal} weight is one whose centralizer subalgebra $M^\vphi := \{x\in M\colon \sigma_t^{\vphi}(x) = x \ \forall t\in \R\}$ is a factor. Almost periodic weights have the property that $(M^\vphi)'\cap M = (M^\vphi)'\cap M^\vphi$ (see \cite[Theorem 10]{Con72}), and so extremal almost periodic weights only occur on factors. Additionally, $\Sd(\vphi)$ for an extremal almost periodic weight $\vphi$ is a subgroup of $\R_+$, the group of positive reals $(0,\infty)$ under multiplication (see, for example, \cite[Lemma 4.9]{Dyk97}). The states $\vphi_\lambda$ in (\ref{eqn:def_of_Powers_factors}) are extremal almost periodic: for $\lambda<1$ this follows from \cite[Theorem 4.2.6]{Con73}; and $\tau$ is tracial so that $\Delta_{\tau}=1$ and $R^{\tau}=R$. Aperiodic examples of extremal almost periodic weights necessarily occur on type $\mathrm{III}_1$ factors, and, while not all type $\mathrm{III}_1$ factors admit such weights (see \cite[Corollary 5.3]{Con74}), the unique separable injective type $\mathrm{III}_1$ factor $R_\infty$ (also known as the \emph{Araki--Woods factor}) has many. To see this, extend the notation in (\ref{eqn:def_of_Powers_factors}) to $\lambda>1$ by letting $\phi_\lambda:=\phi_{1/\lambda}$ and $(R_\lambda,\vphi_\lambda):= (R_{1/\lambda},\vphi_{1/\lambda})$. For a countable subgroup $\Lambda\leq \R_+$, denote
    \begin{align}\label{eqn:def_of_R_Lambda}
        (R_\Lambda,\vphi_\Lambda):= \bigotimes_{\lambda\in \Lambda} (R_\lambda,\varphi_\lambda).
    \end{align}
If $\Lambda$ is of the form $\lambda^\Z$ for some $0<\lambda \leq 1$, then $(R_\Lambda,\vphi_\Lambda)\cong (R_\lambda,\vphi_\lambda)$ thanks to the property $(R_\lambda\bar\otimes R_\lambda,\vphi_\lambda\otimes \vphi_\lambda) = (R_\lambda,\vphi_\lambda)$. Otherwise, $\Lambda$ is dense in $\R_+$ and $R_\Lambda \cong R_\infty$. Moreover, $\vphi_\Lambda$ is an extremal almost periodic state (see, for example, \cite[Lemma 5.2.(i)]{HI22}). Our main result is the following:

\begin{thmalpha}\label{thm:A}
Let $N$ be a diffuse separable injective factor equipped with an extremal almost periodic state $\vphi$. Then 
    \[
        (N,\vphi)\cong (R_\Lambda,\vphi_\Lambda)
    \]
where $\Lambda = \Sd(\vphi)$.
\end{thmalpha}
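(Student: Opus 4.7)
The plan is to adapt Haagerup's CP-approximation strategy from \cite{Haa89}, with periodic states replaced by almost periodic states. When $\Lambda = \{1\}$, $\vphi$ is tracial (since the only bounded subgroup of $\R_+$ is trivial) and $N \cong R$ by Murray--von Neumann; when $\Lambda = \lambda^\Z$ with $0 < \lambda < 1$, the statement is \cite[Theorem~6.1]{Haa89}. I therefore focus on the remaining case where $\Lambda$ is dense in $\R_+$, so that $N \cong R_\infty$ by \cite{Haa87}, and the task is to upgrade this abstract isomorphism to a state-preserving one.

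Fix a sequence $(\lambda_n)_{n \geq 1}$ in $\Lambda$ in which every element of $\Lambda$ appears infinitely often. Since $\vphi$ is extremal, the centralizer $N^\vphi$ is a diffuse injective finite factor and hence $N^\vphi \cong R$. I would inductively construct commuting $\sigma^\vphi$-invariant subfactors $A_n \subset N$ with $(A_n, \vphi|_{A_n}) \cong (M_2(\C), \phi_{\lambda_n})$. At step $n$, write $A_{<n} := A_1 \vee \cdots \vee A_{n-1}$; this is $\sigma^\vphi$-invariant by construction, so Takesaki's theorem produces a state-preserving tensor decomposition
\[
(N, \vphi) \cong \left(A_{<n},\, \bigotimes_{k<n} \phi_{\lambda_k}\right) \bar\otimes (B_{n-1}, \psi_{n-1}),
\]
where $B_{n-1} := A_{<n}' \cap N$ and $\psi_{n-1} := \vphi|_{B_{n-1}}$ remains extremal almost periodic on the injective factor $B_{n-1}$, with $\Sd(\psi_{n-1})$ containing $\lambda_n$ (modulo a trivial skip if $\lambda_n$ has already been absorbed). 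A projection $p_n \in B_{n-1}^{\psi_{n-1}} \cong R$ of $\psi_{n-1}$-weight $\lambda_n/(1+\lambda_n)$ together with an eigenoperator $v_n \in B_{n-1}$ satisfying $\sigma_t^{\psi_{n-1}}(v_n) = \lambda_n^{it} v_n$, $v_n^* v_n = 1 - p_n$, $v_n v_n^* = p_n$, yields the desired $A_n \cong (M_2(\C), \phi_{\lambda_n})$.

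The main obstacle is to show that $A_\infty := \bigvee_n A_n$ equals $N$. Following \cite{Haa89}, I would analyze the $\vphi$-preserving conditional expectations $E_n \colon N \to A_{\leq n}$ provided by Takesaki's theorem and prove $E_n \to \mathrm{id}_N$ pointwise strongly. Almost periodicity decomposes $L^2(N, \vphi) = \bigoplus_{\lambda \in \Lambda} \mc H_\lambda$ into $\Delta_\vphi$-eigenspaces, and once $\lambda \in \langle \lambda_1, \ldots, \lambda_n\rangle$, vectors in $\mc H_\lambda$ should be well-approximated by $E_n$. Extracting the uniform bounds needed to conclude $A_\infty = N$ is the technical heart of the argument, requiring Haagerup's injectivity-based CP-approximation techniques carefully adapted to the dense point spectrum. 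Once density is established, the inductive construction gives a state-preserving isomorphism $\bigotimes_n (M_2(\C), \phi_{\lambda_n}) \cong (N, \vphi)$; since the sequence $(\lambda_n)$ visits every element of $\Lambda$ infinitely often, the infinite-product absorption $(R_\lambda \bar\otimes R_\lambda, \vphi_\lambda \otimes \vphi_\lambda) \cong (R_\lambda, \vphi_\lambda)$ identifies this tensor product state-preservingly with $(R_\Lambda, \vphi_\Lambda)$, completing the proof.
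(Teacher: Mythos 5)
Your proposal correctly identifies the general framework (Haagerup's CP-approximation strategy from \cite{Haa89}, adapted to an almost periodic rather than periodic state), and your preliminary reductions (the cases $\Lambda=\{1\}$ and $\Lambda=\lambda^\Z$) are fine. But the core of the argument is missing, and the inductive construction you describe will not work as stated. You propose to build $A_n\cong(M_2(\C),\phi_{\lambda_n})$ at each step by an abstract existence argument inside the relative commutant $B_{n-1}$ (choosing a projection of the right $\psi_{n-1}$-weight and a $\lambda_n$-eigenoperator partial isometry), and only afterwards hope to prove $\bigvee_n A_n = N$. There is no mechanism forcing such blindly chosen $A_n$ to generate $N$; one can easily arrange all of them to sit inside a proper $\vphi$-invariant subfactor. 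You acknowledge the density question as ``the technical heart'' and defer it to ``Haagerup's injectivity-based CP-approximation techniques,'' but that is precisely the content of the theorem and cannot be left as a black box.

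The paper's proof (Section~\ref{sec:proof_thmA}) works in the opposite order: it fixes a countable generating set $\{x_k\}$ of $N$ and, for each $n$, uses the modular CP factorization of Theorem~\ref{thm:modular_finite_dimensional_approximations} to obtain maps $S\colon N\to\mathbb{M}_{E}^{\otimes m}$ and $T\colon\mathbb{M}_E^{\otimes m}\to N$ with $T\circ S$ close to the identity on $x_1,\dots,x_n$. It then upgrades $T$ to an approximate Kraus representation $\sum a_i^* \,\cdot\,a_i$ with $(a_i)\subset N^\vphi$ a partition of unity (Theorem~\ref{thm:unital_but_infinite_Kraus_approximation}), and via \cite[Theorem~2.3]{Haa89} ($\delta$-related tuples) replaces this by a single conjugation $\Ad{w}$ with $w\in\mathcal{U}(N^\vphi)$. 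This yields a finite dimensional subfactor $F_n\cong\mathbb{M}_{E_n}^{\otimes m_n}$, commuting with the previously built ones, that actually contains elements within $1/n$ of $x_1,\dots,x_n$ in $\|\cdot\|_\vphi$; density of $\bigvee F_n$ is then automatic. Note also that the subfactor built at each stage is $\mathbb{M}_{E_n}^{\otimes m_n}$ for a growing \emph{finite subset} $E_n\subset\Sd(\vphi)$ (with $m_n$ copies), not a single $M_2(\C)$ carrying a single eigenvalue $\lambda_n$: to approximate a given element of $N$ one must capture its eigenoperator components across many eigenvalues simultaneously, and a $2\times 2$ step cannot do this. Your plan needs both of these ideas---the Kraus/$\delta$-related conjugation step that ties the subfactor to the chosen generators, and the use of multi-eigenvalue matrix blocks---to become a proof.
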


If $\Lambda = \lambda^{\Z}$ for some $0< \lambda \leq 1$ then the above is simply either \cite[Theorem 6.1]{Haa89} for $\lambda <1$ or \cite[Theorem 1]{Con76} for $\lambda=1$. Thus the novelty of the above theorem lies only in the case that $\Lambda$ is dense in $\R_+$, but nevertheless our proof simultaneously covers all cases. The proof of Theorem~\ref{thm:A} very closely follows Haagerup's proof of \cite[Theorem 6.1]{Haa89} and can be found in Section~\ref{sec:proof_thmA}.  Roughly, the strategy is to approximate a countable generating set for $N$ by a sequence of finite dimensional subfactors that when equipped with the restriction of $\vphi$ are isomorphic to a finite dimensional tensor factor of $(R_\Lambda, \vphi_\Lambda)$. These subfactors are constructed using approximate completely positive factorizations of the identity map through matrix algebras, whose existence is established in Section~\ref{sec:modular_cp} (paralleling \cite[Section 5]{Haa89}). Notably, Lemma~\ref{lem:approximate_cp_embeddings_into_R} marks a technical deviation from Haagerup's ideas, though it still essentially corresponds to \cite[Lemma 5.3]{Haa89}.
These completely positive maps also respect the modular theory of $\vphi$, a property which is needed to apply the results in Section~\ref{sec:approximate_Kraus_representations} (paralleling \cite[Section 4]{Haa89}). These results, namely Theorem~\ref{thm:unital_but_infinite_Kraus_approximation}, approximate the completely positive maps by $x\mapsto \sum_{n=1}^\infty a_n^* x a_n$ for $(a_n)_{n\in \N}\subset N^\vphi$ a partition of unity, which in turn allows one to further approximate the maps by an inner automorphism using the notion of $\delta$-related tuples from \cite[Section 2]{Haa89}. Despite how similar our results are to those in \cite{Haa89}, working with non-cyclic subgroups of $\R_+$ almost always prevents us from simply citing the corresponding results. So instead the details are presented in full and the reader should find the paper mostly self-contained.

We also remark that if one could directly argue that a separable injective type $\mathrm{III}_1$ factor admits at least one extremal almost periodic state, then Theorem~\ref{thm:A} would yield a new proof of the uniqueness of such factors. However, this remains out of reach. In fact, Haagerup's approach in \cite{Haa87} specifically avoids this by working instead with dominant weights, which are known to always exist for properly infinite factors by \cite[Chapter II]{CT77}.

As an immediate consequence of Theorem~\ref{thm:A} we have the following.

\begin{coralpha}\label{cor:B}
If $\vphi$ and $\psi$ are extremal almost periodic states on $R_\infty$ with $\Sd(\vphi)=\Sd(\psi)$, then there exists $\alpha\in \Aut(R_\infty)$ satisfying $\psi=\vphi\circ \alpha$.
\end{coralpha}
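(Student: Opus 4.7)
The plan is to apply Theorem~\ref{thm:A} twice and compose the resulting state-preserving isomorphisms. Since $R_\infty$ is a diffuse separable injective factor and $\vphi, \psi$ are extremal almost periodic by hypothesis, Theorem~\ref{thm:A} applies to both $(R_\infty, \vphi)$ and $(R_\infty, \psi)$. Setting $\Lambda := \Sd(\vphi) = \Sd(\psi)$, this would produce state-preserving isomorphisms
\[
    \beta_\vphi \colon (R_\infty, \vphi) \to (R_\Lambda, \vphi_\Lambda), \qquad \beta_\psi \colon (R_\infty, \psi) \to (R_\Lambda, \vphi_\Lambda),
\]
meaning $\vphi_\Lambda \circ \beta_\vphi = \vphi$ and $\vphi_\Lambda \circ \beta_\psi = \psi$.

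I would then take $\alpha := \beta_\vphi^{-1} \circ \beta_\psi \in \Aut(R_\infty)$ and verify that
\[
    \vphi \circ \alpha = (\vphi \circ \beta_\vphi^{-1}) \circ \beta_\psi = \vphi_\Lambda \circ \beta_\psi = \psi,
\]
as required. Since the entire argument reduces to composing the two isomorphisms furnished by Theorem~\ref{thm:A}, I do not expect any substantive obstacle; all the real work has already been absorbed into the proof of Theorem~\ref{thm:A}. The conceptual point is that $(R_\Lambda, \vphi_\Lambda)$ functions as a canonical model for every extremal almost periodic state on $R_\infty$ whose modular operator has point spectrum $\Lambda$, so any two such states on $R_\infty$ must differ by precomposition with an automorphism.
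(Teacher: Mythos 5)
Your proof is correct and is precisely the intended argument: the paper states the corollary as an ``immediate consequence'' of Theorem~\ref{thm:A} without spelling it out, and composing the two state-preserving isomorphisms $\beta_\vphi$ and $\beta_\psi$ onto the common model $(R_\Lambda,\vphi_\Lambda)$ is exactly how that immediacy is realized.
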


This corollary combined with the existence of the extremal almost periodic states $\vphi_\Lambda$ in (\ref{eqn:def_of_R_Lambda}) implies there is a one-to-one correspondence between the collection of countable dense subgroups of $\R_+$ and the equivalence classes of extremal almost periodic states on $R_\infty$ under precomposition by automorphisms.

Due to its uniqueness, $R_\infty$ arises from a variety of constructions, and so our results can be used to recognize extremal almost periodic states on these constructions as some $\vphi_\Lambda$. Our last main result is an example of this involving the Cuntz algebra.
If $J$ is any finite or countably infinite set, then recall that the Cuntz algebra, $\mathcal{O}_{J}$, is the universal C*-algebra generated by a family of isometries $\{v_{j}\}_{j \in J}$ with mutually orthogonal ranges, and if $J$ is finite it is further required that $\sum_{j \in J} v_{j}v_{j}^{*} = 1$. Let $\mu\colon J\to (0,1)$ be a function with the property that $\sum_{j \in J} \mu(j) = 1$. Define a gauge action $\R \overset{\sigma}{\curvearrowright}\mathcal{O}_{J}$ by
    \[
        \sigma_{t}(v_{j}) = \mu(j)^{it}v_{j},
    \]
for each $t\in \R$ and $j\in J$. It is known that a unique KMS state $\vphi$ exists for such an action (see, for example, \cite{Tho17}). Letting $\pi_\vphi$ be its GNS representation, we will abuse notation slightly and let $\vphi$ also denote its extension to $\pi_\vphi(\mathcal{O}_J)''$ as a faithful normal state. Denote by $\Lambda$ the multiplicative subgroup of $\R_+$ generated by $\mu(J)$.  It has been shown by Izumi for $J$ finite \cite{Izu93} and Thomsen for $J$ infinite \cite{Tho19} that $\pi_\vphi(\mathcal{O}_J)''$ is a separable injective factor of type $\mathrm{III}_{\lambda}$ if $\Lambda=\lambda^\Z$ for some $\lambda \in (0, 1)$ and of type $\mathrm{III}_{1}$ otherwise. In the former case, \cite[Theorem 6.1]{Haa89} implies $(\pi_\vphi(\mathcal{O}_J)'',\vphi)\cong (R_\lambda,\vphi_\lambda)$ since $\vphi$ is necessarily $\frac{2\pi}{\log(1/\lambda)}$-periodic. 
In the general case we argue in Section~\ref{section:Cuntz_algebras} that Theorem~\ref{thm:A} can be applied to obtain the following.

\begin{thmalpha}\label{thm:C}
With the above notation, $\vphi$ is an extremal almost periodic state on $\pi_\vphi(\mathcal{O}_J)''$ and there is a state preserving isomorphism $(\pi_\vphi(\mathcal{O}_J)'',\vphi)\cong (R_\Lambda, \vphi_\Lambda)$.
\end{thmalpha}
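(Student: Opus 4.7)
The plan is to verify each hypothesis of Theorem~\ref{thm:A} for the pair $(M,\vphi)$, where $M := \pi_\vphi(\mathcal{O}_J)''$, and then invoke the theorem directly. By \cite{Izu93, Tho19}, $M$ is a separable injective factor of type $\mathrm{III}_\lambda$ (if $\Lambda = \lambda^{\Z}$) or type $\mathrm{III}_1$ (otherwise), hence diffuse, and the state $\vphi$ extends to a faithful normal state on $M$ whose modular automorphism group $\sigma^\vphi$ agrees on $\pi_\vphi(\mathcal{O}_J)$ with the gauge action $\sigma$ determined by $\mu$.

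For almost periodicity, the linear span of the Cuntz monomials $\{v_\alpha v_\beta^*\}$ (indexed over finite multi-indices $\alpha,\beta$ in $J$) forms a norm-dense $*$-subalgebra of $\mathcal{O}_J$. Writing $\mu(\alpha) := \mu(\alpha_1)\cdots \mu(\alpha_n)$ for $\alpha = (\alpha_1, \ldots, \alpha_n)$, each such monomial is an analytic eigenvector of $\sigma_t^\vphi$ via
\[
\sigma_t^\vphi(v_\alpha v_\beta^*) = \left(\frac{\mu(\alpha)}{\mu(\beta)}\right)^{it} v_\alpha v_\beta^*.
\]
Viewed in $L^2(M, \vphi)$ via the GNS map, these monomials span a dense subspace of joint eigenvectors of $\Delta_\vphi$, so $\Delta_\vphi$ is diagonalizable; equivalently, $\vphi$ is almost periodic. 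A quick computation using $v_\alpha^* v_\alpha = 1$ shows the $\lambda$-eigenvector $v_\alpha v_\beta^*$ has nonzero GNS image for every pair $\alpha,\beta$, so the eigenvalues $\mu(\alpha)/\mu(\beta)$ exhaust the multiplicative subgroup of $\R_+$ generated by $\mu(J)$, yielding $\Sd(\vphi) = \Lambda$.

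The main obstacle will be verifying extremality, i.e.\ that $M^\vphi$ is a factor. My first approach would be to exploit the uniqueness of $\vphi$ as a $\sigma$-KMS state together with the canonical Cuntz endomorphism $\rho(x) := \sum_{j \in J} v_j x v_j^*$, which commutes with $\sigma^\vphi$ and hence restricts to an endomorphism of $M^\vphi$ preserving $Z(M^\vphi)$. One would then attempt to show that any projection in $Z(M^\vphi)$ is trivial by exploiting a mixing property of $\rho$ on $Z(M^\vphi)$ together with the tracial character of $\vphi|_{Z(M^\vphi)}$ (automatic since $\sigma^\vphi$ acts trivially on the center of the centralizer). Alternatively, one could leverage the explicit ITPFI-type matrix structures of $M$ underlying \cite{Izu93, Tho19} to control $Z(M^\vphi)$ directly, noting that the periodic case already yields extremality through \cite[Theorem 6.1]{Haa89}.

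Once extremality is in hand, Theorem~\ref{thm:A} applies and immediately produces the desired state-preserving isomorphism $(M, \vphi) \cong (R_\Lambda, \vphi_\Lambda)$.
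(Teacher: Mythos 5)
There is a genuine gap: extremality of $\vphi$ is not actually proven. Your reduction to Theorem~\ref{thm:A} is the right strategy, and your handling of almost periodicity and of $\Sd(\vphi)=\Lambda$ is correct (the monomials $v_\alpha v_\beta^*$ are eigenoperators with eigenvalues $\mu(\alpha)/\mu(\beta)$, and these ratios do exhaust the multiplicative group $\Lambda$). But verifying that $M^\vphi$ is a factor is the central technical obstacle, and your proposal only gestures at two possible routes (``one would then attempt to show \ldots'', ``alternatively, one could leverage \ldots'') without carrying either through. In particular, the claim that the Cuntz endomorphism $\rho(x)=\sum_j v_j x v_j^*$ has a usable mixing property on $Z(M^\vphi)$, and that this forces $Z(M^\vphi)=\C$, is precisely the kind of assertion that needs a proof — nothing about $\rho$ commuting with $\sigma^\vphi$ or $\vphi$ being tracial on the centralizer immediately rules out nontrivial central projections.

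For comparison, the paper establishes extremality by extracting a stronger statement from the proof of Thomsen's result on the Connes spectrum \cite[Theorem 5]{Tho19}: for every central projection $q$ of $M^\vphi$ and every $j\in J$, there is a partial isometry $w\in M^{(\vphi,\mu(j))}$ with $qwq\neq 0$. Passing to the point modular extension $G\overset{\alpha}{\curvearrowright}M$ of $\sigma^\vphi$ (where $G=\widehat{\Lambda}$), this shows $\mu(J)\subset \mathrm{Sp}(\alpha^{(q)})$ for all such $q$, hence $\mu(J)\subset\Gamma(\alpha)$ by \cite[Proposition 2.2.2]{Con73}. Since $\Gamma(\alpha)$ is a subgroup of $\Lambda$ containing the generating set $\mu(J)$, it equals $\Lambda$, and then \cite[Theorem 2.4.1]{Con73} yields factoriality of $M^\alpha=M^\vphi$. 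If you want to complete your proof along the lines you sketched, you would need to supply an argument of comparable substance in place of this Connes-spectrum computation; as written, the extremality step is missing.
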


\subsection*{Acknowledgements}

The authors thank Cyril Houdayer for suggesting this problem. The second author was supported by NSF grant DMS-2247047.


\section{Preliminaries}

We say a von Neumann algebra $M$ is \emph{separable} if it is separable in the weak* topology induced by $M\cong (M_*)^*$; equivalently, if $M_*$ is separable as a Banach space. We write $N\leq M$ when $N$ is a unital von Neumann subalgebra of $M$. For a family of subalgebras $\{N_i\leq M\colon i\in I\}$, we write $\bigvee_{i\in I} N_i$ for the von Neumann algebra generated by $\bigcup_{i\in I} N_i$. For a state $\vphi$ on $M$ we define the inner product
    \[
        \<x,y\>_\vphi = \vphi(y^*x) \qquad x,y\in M,
    \]
and denote its associated norm by $\|x\|_\vphi$.

We assume the reader has some familiarity with weights and modular theory, but the complete details can be found in \cite{Tak03} (see also \cite[Section 1]{GGLN}).  For a faithful normal semifinite weight on $M$, its \emph{modular automorphism group} refers to the action $\R\overset{\sigma^\vphi}{\curvearrowright} M$ defined by
    \[
        \sigma_t^{\vphi}(x) := \Delta_\vphi^{it} x \Delta_\vphi^{-it}.
    \]
We say a subalgebra $N\leq M$ is \emph{$\vphi$-invariant} if $\vphi|_N$ is semifinite and $\sigma_t^{\vphi}(N) = N$ for all $t\in \R$, which is equivalent to the existence of a faithful normal $\vphi$-invariant conditional expectation from $M$ onto $N$ (see \cite[Theorem IX.4.2]{Tak03}). For $\lambda\in \R_+$ we denote
    \[
        M^{(\vphi,\lambda)} := \{x\in M\colon \sigma_t^{\vphi}(x) = \lambda^{it} x\ \forall t\in \R\},
    \]
so that $M^{(\vphi,1)}=M^\vphi$. We call such elements \emph{eigenoperators} of $\vphi$. We also denote
    \[
        M^{(\vphi,\eig)}:= \text{span}\left( \bigcup_{\lambda\in \R_+} M^{(\vphi,\lambda)} \right),
    \]
which is a unital $*$-subalgebra of $M$, and it is weak* dense when $\vphi$ is almost periodic (see \cite[Lemma 1.4]{GGLN}). 

Given an almost periodic weight $\vphi$ on $M$, let $\Lambda \leq \R_+$ be the group generated by $\Sd(\vphi)$. Equipping $\Lambda$ with the discrete topology, let $G$ denote its compact dual group. Identifying the dual of $\R_+$ with $\R$ via the pairing $(s,t) = s^{it}$, the \emph{transpose} of the inclusion map $\iota\colon \Lambda\to \R_+$ gives a continuous group homomorphsim $\hat{\iota}\colon \R\to G$ determined by
    \[
        (\hat{\iota}(t)|\lambda) = (\iota(\lambda) | t) = \lambda^{it}.
    \]
The action $\R\overset{\sigma^\vphi}{\curvearrowright} M$ given by the modular automorphism group of $\vphi$ determines an action $G\overset{\alpha}{\curvearrowright} M$ satisfying $\alpha_{\hat{\iota}(t)} = \sigma_t^\vphi$ for all $t\in \R$. In particular, $M^\alpha = M^\vphi$ and more generally for $x\in M^{(\vphi,\lambda)}$ one has $\alpha_s(x) = (\lambda| s) x$ for all $s\in G$. We call this action of $G$ the \emph{point modular extension} of $\R\overset{\sigma^\vphi}{\curvearrowright} M$. For each $\lambda\in \Lambda$, there is a normal $M^\vphi$-$M^\vphi$-bimodular idempotent map $\E_{(\vphi,\lambda)}\colon M\to M^{(\vphi,\lambda)}$ defined by
    \[
        \E_{(\vphi,\lambda)}(x):= \int_G \overline{(\lambda| s)} \alpha_s(x)\ d\mu(s),
    \]
where $\mu$ is a normalized Haar measure on $G$. In particular, $\E_\vphi:=\E_{(\vphi,1)}$ is the unique $\vphi$-preserving faithful normal conditional expectation onto the centralizer. If $\vphi$ is a state, then one also has
    \[
        \|x\|_\vphi^2 = \sum_{\lambda\in \Lambda} \|\E_{(\vphi,\lambda)}(x)\|_\vphi^2,
    \]
for all $x\in M$ since $\Delta_\vphi = \sum_{\lambda\in \Lambda} 1_{\{\lambda\}}(\Delta_\vphi)$. See \cite{Dyk95, Dyk97} for additional details.

Suppose that $F$ is a finite-dimensional subfactor of a factor $N$, and $F^{c}$ is defined by $F^{c} = F' \cap N$.  Note that this means $N \cong F \otimes F^{c}$.  For a faithful normal state $\vphi$ on $N$, the following three conditions are equivalent (see, for example, \cite[Section 4]{Haa89}):
    \begin{enumerate}
    \item $F$ is $\Delta_\vphi$-invariant;
    \item there exists a $\vphi$-preserving conditional expectation from $N$ onto $F$;
    \item $(N, \vphi) \cong (F, \vphi) \otimes (F^{c}, \vphi)$.
    \end{enumerate}
For the purposes of this article, condition (3) above will be the one which is easiest to exploit. We will denote this more compactly as $\vphi=\vphi|_F\otimes \vphi|_{F^c}$.

\section{Extremal almost periodic weights}

In this section we present a few ways to construct new extremal almost periodic weights out of old ones. We will only need to apply them to \emph{states} in the present article, but we prove them in the generality of weights whenever possible because they may be of independent interest. The following lemma is proven in \cite[Lemma 5.2.(ii)]{HI22} in the case of states and for $M=R_\infty$ the Araki--Woods factor. However, the proof in the generality below is exactly the same.

\begin{lem}\label{lem:tensor_of_extremal_is_extremal}
Let $(M,\vphi)$ be a von Neumann algebra equipped with an extremal almost periodic weight. If $(N,\psi)$ is a factor equipped with an almost periodic weight satisfying $\Sd(\psi)\subset \Sd(\vphi)$, then $\varphi\otimes \psi$ is extremal almost periodic on $M\bar\otimes N$ with $\Sd(\vphi\otimes \psi) = \Sd(\vphi)$.
\end{lem}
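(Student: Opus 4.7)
The plan is to prove the lemma in two stages: first verify that $\vphi\otimes\psi$ is almost periodic with the stated point spectrum, and then show that its centralizer is a factor. The almost periodicity is immediate from $\Delta_{\vphi\otimes\psi}=\Delta_\vphi\otimes\Delta_\psi$, which is a tensor product of diagonalizable positive operators and is therefore itself diagonalizable. The point spectrum is the pointwise product $\Sd(\vphi)\cdot\Sd(\psi)$. Since $\vphi$ is extremal almost periodic, $\Sd(\vphi)$ is a subgroup of $\R_+$, and the hypothesis $\Sd(\psi)\subset\Sd(\vphi)$ then gives $\Sd(\vphi)\cdot\Sd(\psi)\subset\Sd(\vphi)$, while $1\in\Sd(\psi)$ gives the reverse inclusion. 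Hence $\Sd(\vphi\otimes\psi)=\Sd(\vphi)$.

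For factoriality of the centralizer, take $z\in \mathcal{Z}\bigl((M\bar\otimes N)^{\vphi\otimes\psi}\bigr)$; the first sub-step is to show $z\in 1\otimes N$. Since $M^\vphi\otimes 1$ lies in the centralizer of $\vphi\otimes\psi$, $z$ commutes with it. For any $\omega\in N_*$ the slice map $\mathrm{id}\otimes\omega\colon M\bar\otimes N\to M$ is $M$-bimodular, so $(\mathrm{id}\otimes\omega)(z)\in (M^\vphi)'\cap M$. Because $\vphi$ is an extremal almost periodic weight, one has $(M^\vphi)'\cap M=\mathcal{Z}(M^\vphi)=\C 1$, so each slice is a scalar. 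The assignment $\omega\mapsto (\mathrm{id}\otimes\omega)(z)$ is a bounded linear functional on $N_*$, hence equals $\omega\mapsto\omega(n)$ for a unique $n\in N=(N_*)^*$. A standard separating argument with slice maps then yields $z=1\otimes n$, and the modular invariance $\sigma_t^{\vphi\otimes\psi}(z)=z$ forces $n\in N^\psi$.

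To conclude $n\in\C 1$, I would test the centrality of $z$ against the off-diagonal eigenoperators. For each $\nu\in\Sd(\psi)$, the containment $\Sd(\psi)\subset\Sd(\vphi)$ together with the group property of $\Sd(\vphi)$ gives $\nu^{-1}\in\Sd(\vphi)$, so one may pick a nonzero $x\in M^{(\vphi,\nu^{-1})}$. For any $y\in N^{(\psi,\nu)}$, the element $x\otimes y$ lies in $(M\bar\otimes N)^{(\vphi\otimes\psi,1)}$, and $z(x\otimes y)=(x\otimes y)z$ collapses via $z=1\otimes n$ to $x\otimes ny=x\otimes yn$, hence $ny=yn$. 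Since $\bigcup_{\nu\in\Sd(\psi)} N^{(\psi,\nu)}$ spans a weak*-dense $*$-subalgebra of $N$ by almost periodicity of $\psi$, $n$ commutes with all of $N$, so $n\in\mathcal{Z}(N)=\C 1$ as $N$ is a factor, giving $z\in\C 1$. The main obstacle I anticipate is precisely this last step: one needs $M^{(\vphi,\nu^{-1})}\neq\{0\}$ for every $\nu\in\Sd(\psi)$ in order to extract the commutation relation on $N^{(\psi,\nu)}$, and this is exactly what the hypothesis $\Sd(\psi)\subset\Sd(\vphi)$—combined with extremality so that $\Sd(\vphi)$ is closed under inversion—delivers; without this compatibility between the two point spectra the centrality of $z$ would fail to see enough eigenoperators of $\psi$ to pin down $n$ up to a scalar.
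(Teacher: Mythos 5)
Your proof is correct. Since the paper does not reproduce an argument here but instead defers to \cite[Lemma 5.2.(ii)]{HI22} (for states on $R_\infty$, with the remark that the general case is identical), I cannot compare line-by-line; but the route you take---slicing a central element of the centralizer against $M^\vphi\otimes 1$ to land in $(M^\vphi)'\cap M=\C 1$ via \cite[Theorem 10]{Con72} and factoriality of $M^\vphi$, then using the hypothesis $\Sd(\psi)\subset\Sd(\vphi)$ together with the group property of $\Sd(\vphi)$ to manufacture centralizer elements $x\otimes y$ with $x\in M^{(\vphi,\nu^{-1})}$, $y\in N^{(\psi,\nu)}$, and finally invoking weak$^*$ density of $N^{(\psi,\eig)}$ and factoriality of $N$---is the standard and expected argument, and it is exactly where the hypothesis $\Sd(\psi)\subset\Sd(\vphi)$ must be spent. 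The point spectrum computation $\Sd(\vphi\otimes\psi)=\Sd(\vphi)\cdot\Sd(\psi)=\Sd(\vphi)$ is likewise correct. One small remark: the step ``each slice is a scalar, hence $z=1\otimes n$'' is a special case of the Ge--Kadison relative commutant theorem $(M^\vphi\otimes\C 1)'\cap(M\bar\otimes N)=((M^\vphi)'\cap M)\bar\otimes N$, which you are essentially reproving; either citation or your direct slice-map argument is fine.
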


The following can be viewed as a partial converse to the previous lemma.

\begin{lem}\label{lem:corners_of_ext_ap_weights_are_ext_ap}
Let $M=P\bar\otimes N$ be factor with $P$ a type $\mathrm{I}$ factor. Suppose $\vphi$ is an extremal almost periodic weight on $M$ satisfying: $\vphi|_P$ and $\vphi|_N$ are seminifinite; $\vphi|_P$ is almost periodic;  and $\vphi= \vphi|_P\otimes \vphi|_N$. Then $\vphi|_N$ is extremal almost periodic with $\Sd(\vphi|_N)=\Sd(\vphi)$. In particular, $\Sd(\vphi|_P)\subset \Sd(\vphi|_N)$.
\end{lem}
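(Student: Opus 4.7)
The plan is to exploit the explicit matrix unit structure of the type $\mathrm{I}$ factor $P$ together with Murray--von Neumann comparability of projections inside the factor $M^\vphi$. Writing $P = B(H)$ and $\vphi|_P = \Tr(\rho\,\cdot\,)$, almost periodicity of $\vphi|_P$ forces $\rho$ to have pure point spectrum, so I fix an orthonormal basis $(e_i)_{i\in I}$ of $H$ of $\rho$-eigenvectors with eigenvalues $(\mu_i)_{i\in I}$ together with the associated matrix units $e_{ij}$. A direct computation gives $e_{ij} \in P^{(\vphi|_P,\mu_i/\mu_j)}$, so $\Sd(\vphi|_P) = \{\mu_i/\mu_j : i,j \in I\}$, and every diagonal unit $e_{ii}$ lies in the centralizer $P^{\vphi|_P}$.

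Almost periodicity of $\vphi|_N$ is the easy piece: on the GNS space $\Delta_\vphi = \Delta_{\vphi|_P} \otimes \Delta_{\vphi|_N}$, so restricting to $\xi \otimes L^2(N,\vphi|_N)$ for any nonzero $\Delta_{\vphi|_P}$-eigenvector $\xi$ (with eigenvalue $\nu$) exhibits $\nu\,\Delta_{\vphi|_N}$ as the restriction of the diagonalizable operator $\Delta_\vphi$ to an invariant subspace, forcing $\Delta_{\vphi|_N}$ itself to be diagonalizable. The same spectral picture also yields $\Sd(\vphi) = \Sd(\vphi|_P) \cdot \Sd(\vphi|_N)$.

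The key step is showing $\Sd(\vphi|_P) \subset \Sd(\vphi|_N)$, and this is where extremality of $\vphi$ is used. Fix $\mu = \mu_i/\mu_j \in \Sd(\vphi|_P)$ (the case $\mu = 1$ being trivial). The two nonzero projections $e_{ii} \otimes 1$ and $e_{jj} \otimes 1$ lie in $M^\vphi$, and since $M^\vphi$ is a factor they are Murray--von Neumann comparable. After possibly swapping $i$ and $j$, I pick a partial isometry $v \in M^\vphi$ with $v^*v = e_{ii} \otimes 1$ and $vv^* \leq e_{jj} \otimes 1$. These relations force $v \in (e_{jj} \otimes 1) M (e_{ii} \otimes 1) = e_{ji} \otimes N$, so $v = e_{ji} \otimes y$ for some $y \in N$. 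Modular invariance of $v$ then reads $y \in N^{(\vphi|_N,\mu)}$, and $v^*v = e_{ii} \otimes 1$ forces $y^*y = 1$. Thus $y$ is a nonzero isometry in $N^{(\vphi|_N,\mu)}$, so $\mu \in \Sd(\vphi|_N)$. Combined with $\Sd(\vphi) = \Sd(\vphi|_P) \cdot \Sd(\vphi|_N)$ this immediately yields $\Sd(\vphi|_N) = \Sd(\vphi)$.

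Extremality of $\vphi|_N$ falls out of the same picture: $(e_{ii} \otimes 1) M^\vphi (e_{ii} \otimes 1) = e_{ii} \otimes N^{\vphi|_N} \cong N^{\vphi|_N}$, which as a nonzero corner of the factor $M^\vphi$ is itself a factor. The only real subtlety I anticipate is the comparability step when $H$ is infinite-dimensional and $\vphi|_N$ is a genuine (non-finite) weight, which makes both $e_{ii} \otimes 1$ infinite projections in $M^\vphi$; however, Murray--von Neumann comparability is valid in any factor, so the required partial isometry still exists and the matrix-unit decomposition of $v$ goes through unchanged.
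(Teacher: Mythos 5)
Your proposal is correct and follows essentially the same route as the paper, so I will only point out the small places where the details diverge. The paper handles almost periodicity and extremality of $\vphi|_N$ in one stroke by identifying $(N,\vphi|_N)$ with the corner $\bigl((e_{ii}\otimes 1)M(e_{ii}\otimes 1),\,\frac{1}{\Tr(h e_{ii})}\vphi\bigr)$ and citing a remark from \cite{GGLN} saying that cutting down an extremal almost periodic weight by a centralizer projection preserves those properties; you instead give the underlying arguments explicitly (diagonalizability of $\Delta_{\vphi|_N}$ from restricting $\Delta_{\vphi|_P}\otimes\Delta_{\vphi|_N}$ to an eigenvector slice, and factoriality of $N^{\vphi|_N}$ from it being a corner of the factor $M^\vphi$), which is more self-contained but identical in substance. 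For the key inclusion $\Sd(\vphi|_P)\subset\Sd(\vphi|_N)$, the paper observes that $(e_{ii}\otimes 1)M^\vphi(e_{jj}\otimes 1)\neq 0$ because $M^\vphi$ is a factor and then reads off the eigenvalue, while you invoke Murray--von Neumann comparability of $e_{ii}\otimes 1$ and $e_{jj}\otimes 1$ in $M^\vphi$; both are immediate consequences of factoriality and your version even produces an isometry rather than just a nonzero element, though that extra strength is not needed. The one thing to tidy up is the ordering: your deduction that $\Sd(\vphi|_N)=\Sd(\vphi)$ from $\Sd(\vphi|_P)\subset\Sd(\vphi|_N)$ and $\Sd(\vphi)=\Sd(\vphi|_P)\cdot\Sd(\vphi|_N)$ silently uses that $\Sd(\vphi|_N)$ is multiplicatively closed, which you obtain from extremality of $\vphi|_N$ (cf.\ \cite[Lemma 4.9]{Dyk97}) — but you only present the extremality argument in the final paragraph. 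Since that argument has no dependence on the $\Sd$ computation, simply moving it before the conclusion about $\Sd(\vphi|_N)$ closes the gap; as written it reads as circular even though it is not.
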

\begin{proof}
Identifying $P\cong B(\H)$ for some Hilbert space $\H$, we have that $\vphi|_P(x) = \Tr(h^{\frac12} x h^{\frac12})$ for some  positive (unbounded) operator $h$ on $\H$ and $\sigma_t^{\vphi|_P}(x) = h^{it} x h^{-it}$ for all $t\in \R$. Since $\vphi|_P$ is almost periodic, $h$ is necessarily diagonalizable with respect to some family of matrix units $\{e_{i,j}\in B(\H)\colon i,j\in I\}$, In particular one has $e_{i,i}\otimes 1\in M^\vphi$ for all $i\in I$. Because $(N,\vphi|_N)$ can be identified with $( (e_{i,i}\otimes 1) M (e_{i,i}\otimes 1), \frac{1}{\Tr(h e_{i,i})}\vphi)$ for any $i\in I$, it follows from \cite[Remark 1.5]{GGLN} that $\vphi|_N$ is extremal almost periodic.

Next, we see that $e_{i,j}$ is an eigenoperator of $\vphi|_P$ for each $i,j\in I$ since $h$ is diagonalized with respect to these matrix units, say with eigenvalue $\lambda_{i,j}$. Note that $\lambda_{j,i}=1/\lambda_{i,j}$.  Using that $M^\vphi$ is a factor we have $e_{i,i}M^\vphi e_{j,j} = e_{i,j}\otimes N$ is non-zero for each $i,j\in I$. Hence there is a non-zero $x\in N$ with $e_{i,j}\otimes x\in M^\vphi$, and the factorization $\vphi=\vphi|_P\otimes \vphi|_N$ implies $x\in N^{(\vphi|_N,1/\lambda_{i,j})} = N^{(\vphi|_N,\lambda_{j,i})}$. Consequently,
    \[
        \Sd(\vphi|_P) = \{\lambda_{i,j}\colon i,j\in I\} \subset \Sd(\vphi|_N).
    \]
Since $\Sd(\vphi|_N)$ is a group and $\Sd(\vphi)=\Sd(\vphi|_P)\Sd(\vphi|_N)$, it follows that $\Sd(\vphi|_N) = \Sd(\vphi)$ as claimed.
\end{proof}

Observe that if $\vphi$ in previous lemma is a state, then of course its restrictions to $P$ and $N$ are automatically semifinite, but also $\vphi|_P= \Tr(h^{\frac12}\,\cdot\, h^{\frac12})$ is automatically almost periodic since $h$ is necessarily trace class and hence diagonalizable.

\begin{lem}\label{lem:ext_ap_gives_ext_ap_for_nice_RD_deriv}
Let $M$ be a factor with separable predual equipped with an extremal almost periodic weight $\vphi$, let $\{p_i\in M^\vphi\colon i\in I\}$ be a family of projections satisfying $\sum_i p_i = 1$, and let $\{\mu_i\in \R_+\colon i\in I\}$ satisfy $\frac{\mu_i}{\mu_j}\in \Sd(\vphi)$ for all $i,j\in I$. Set
    \[
        h:= \sum_{i\in I} \mu_i p_i.
    \]
Then $\psi:=\vphi(h^\frac12\,\cdot\, h^\frac12)$ is an extremal almost periodic weight with $\Sd(\psi)=\Sd(\vphi)$.
\end{lem}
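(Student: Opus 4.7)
The plan is to first establish the basic modular theory for $\psi$, then compute its eigenspaces to pin down $\Sd(\psi)$, and finally verify extremality by analyzing the center of $M^\psi$. Since each $p_i \in M^\vphi$ with $\mu_i > 0$ and $\sum_i p_i = 1$, the operator $h$ is a positive invertible (possibly unbounded) operator affiliated with the centralizer $M^\vphi$. By standard modular theory (see, e.g., \cite{Tak03}), $\psi$ is a faithful normal semifinite weight whose modular automorphism group satisfies $\sigma_t^\psi = \Ad{h^{it}} \circ \sigma_t^\vphi$ with $h^{it} = \sum_i \mu_i^{it} p_i \in M^\vphi$ for each $t \in \R$.

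To show almost periodicity with $\Sd(\psi) \subset \Sd(\vphi)$, I would perform the direct computation that for $x \in M^{(\vphi, \lambda)}$, each $p_i x p_j$ lies in $M^{(\psi,\, \mu_i \mu_j^{-1}\lambda)}$. The hypothesis $\mu_i/\mu_j \in \Sd(\vphi)$, combined with $\Sd(\vphi)$ being a group, forces $\mu_i \mu_j^{-1} \lambda \in \Sd(\vphi)$. Writing $x = \sum_{i,j} p_i x p_j$ (strong operator convergence) and using density of $M^{(\vphi, \eig)}$ in $M$, it follows that $M^{(\psi, \eig)}$ is weak* dense, so $\psi$ is almost periodic and $\Sd(\psi) \subseteq \Sd(\vphi)$. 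For the reverse inclusion, taking $i = j$ reduces to showing $p_i M^{(\vphi, \mu)} p_i \neq 0$ for each $\mu \in \Sd(\vphi)$.

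This nonvanishing is the crux of the argument and rests on factorness of $M^\vphi$. I would prove the stronger lemma: for nonzero projections $p, q \in M^\vphi$ and any $\mu \in \Sd(\vphi)$, the corner $p M^{(\vphi, \mu)} q$ is nonzero. Starting from any nonzero $x \in M^{(\vphi, \mu)}$, take a polar decomposition $x = v|x|$ to obtain a nonzero partial isometry $v \in M^{(\vphi, \mu)}$ with both $vv^*$ and $v^*v$ nonzero projections in $M^\vphi$. Using comparison of projections in the factor $M^\vphi$, first select $t \in M^\vphi$ with $t^*t \leq q$, $tt^* \leq v^*v$ nonzero, and then---only after $t$ is fixed---select $s \in M^\vphi$ with $s^*s \leq v(tt^*)v^*$, $ss^* \leq p$ nonzero. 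This order is essential: it guarantees that the supports overlap, so that $(svt)(svt)^* = s\bigl(v(tt^*)v^*\bigr)s^* = ss^* \neq 0$. The element $svt$ then lies in $p M^{(\vphi, \mu)} q$ and is nonzero, which applied with $p = q = p_i$ finishes the proof that $\Sd(\psi) = \Sd(\vphi)$.

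Finally, for extremality I would show $Z(M^\psi) = \C 1$. Each $p_i$ commutes with $h^{it}$ and hence lies in $M^\psi$, so any $z \in Z(M^\psi)$ satisfies $z = \sum_i p_i z p_i$. On the $(i,i)$-corner, $h^{it}$ acts as the scalar $\mu_i^{it}$, so $\sigma_t^\psi$ and $\sigma_t^\vphi$ agree there, giving $p_i M^\psi p_i = p_i M^\vphi p_i$; this is a factor, forcing $p_i z p_i = c_i p_i$ for some scalar $c_i$. To equate the $c_i$, apply the corner lemma with $\mu = \mu_j/\mu_i$ to produce a nonzero $y \in p_i M^{(\vphi, \mu_j/\mu_i)} p_j \subset M^\psi$; the identity $zy = yz$ then yields $c_i y = c_j y$, so $c_i = c_j$, and $z$ is scalar. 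The main obstacle throughout is the corner nonvanishing lemma, where the partial-isometry choices must be carefully coordinated using factorness of $M^\vphi$ so that the conjugation of $v$ does not collapse into the zero corner.
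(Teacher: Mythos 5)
Your proof is correct, and its treatment of almost periodicity and $\Sd(\psi)=\Sd(\vphi)$ tracks the paper's closely: both start from $\sigma_t^\psi = \Ad{h^{it}}\circ\sigma_t^\vphi$, use the inclusion $p_i M^{(\vphi,\lambda)} p_j \subset M^{(\psi,\mu_i\lambda/\mu_j)}$ for one direction, and for the other build a nonzero element of $M^{(\psi,\lambda)}$ by two successive comparisons of projections in the factor $M^\vphi$ (your $svt$ is the analogue of the paper's $p_i u v w p_i$; the paper specializes to $p=q=p_i$ but the mechanics are identical). The genuine divergence is the extremality argument. The paper normalizes the $\mu_i$ into $\Lambda=\Sd(\vphi)$, passes to the point modular extensions $\alpha,\beta$ of $\sigma^\vphi,\sigma^\psi$ on the compact dual group $G$ of $\Lambda$, exhibits the $G$-cocycle $u_s = \sum_i (s\mid\mu_i) p_i$ implementing $\beta_s = \Ad{u_s}\circ\alpha_s$, and then transfers factoriality through $M\rtimes_\alpha G \cong M\rtimes_\beta G$ via \cite[Lemma 2.27]{GGLN}. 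You instead prove $Z(M^\psi)=\C 1$ directly: a central $z$ is block diagonal because each $p_i\in M^\psi$, each block $p_i z p_i$ is a scalar multiple of $p_i$ since $p_i M^\psi p_i = p_i M^\vphi p_i$ is a factor, and your corner lemma at $\mu=\mu_j/\mu_i$ supplies a nonzero intertwiner in $M^\psi$ between the $(i,i)$ and $(j,j)$ blocks, forcing all the scalars to agree. Your route is more elementary and self-contained---no crossed products, no dual group, no normalization of the $\mu_i$---while the paper's is shorter given the machinery already developed in \cite{GGLN} and has the conceptual advantage of exhibiting the cocycle equivalence between $\alpha$ and $\beta$ explicitly. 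Both arguments are valid.
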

\begin{proof}
The weight $\psi$ is faithful since $h$ is non-singular, and since $\vphi$ is strictly semifinite and $p_i M^\vphi p_i \subset M^\psi$ it follows that $\psi$ is also strictly semifinite. Observe that $M^{(\psi,\eig)}$ contains $\text{span}\{ p_i M^{(\vphi,\eig)} p_j\colon i,j\in I\}$ and is therefore weak* dense. Consequently, $\psi$ is almost periodic by \cite[Lemma 1.4]{GGLN}. 

We next compute $\Sd(\psi)$. Denote $\Lambda:=\Sd(\vphi)$, which is a group by virtue of $\vphi$ being extremal. Fix $\lambda\in \Lambda$ and $i\in I$ and note that there exists a partial isometry $v\in M^{(\vphi,\lambda)}$. Since $v^*v$ and $p_i$ are not centrally orthogonal in $M^\vphi$, we can find a non-zero partial isometry $w\in M^\vphi$ satisfying $w^*w\leq p_i$ and $ww^*\leq v^*v$. Hence $v^*v wp_i=w$, and in particular $v w p_i$ is a non-zero partial isometry. Since $vwp_iw^*v^*$ and $p_i$ are also not centrally orthogonal in $M^\vphi$, we can find another partial isometry $u\in M^\vphi$ satisfying $u^*u \leq vwp_i w^* v^*$ and $uu^*\leq p_i$. Hence $p_i u vwp_i w^* v^* = u$, and in particular $p_i u vwp_i \neq 0$. Also, the Connes cocycle derivative theorem implies
    \[ 
        \sigma_t^\psi( p_i u vwp_i) = h^{it} \sigma_t^\vphi(p_i u vwp_i) h^{-it} = \mu_i^{it} p_iu \sigma_t^\vphi( v) w p_i \mu_i^{-it} = \lambda^{it} p_i u v w p_i.
    \]
Hence $\Lambda \subset \Sd(\psi)$. The reverse inclusion follows from $\text{span}\{ p_i M^{(\vphi,\eig)} p_j\colon i,j\in I\}$ being weakly dense: $\frac{\mu_i}{\mu_j}\in \Lambda$ implies $p_i M^{(\vphi,\lambda)} p_j \in \bigcup_{\nu \in \Lambda} M^{(\psi,\nu)}$.

Finally, we establish extremality. By replacing $\psi$ with $\frac{1}{\mu_{i_0}} \psi$ for some fixed $i_0\in I$, we may assume $\mu_i\in \Lambda$ for all $i\in I$. Let $G$ denote the compact dual to $\Lambda$ (which we give the discrete topology) and let $G\overset{\alpha}{\curvearrowright} M$ and $G\overset{\beta}{\curvearrowright} M$ be the point modular extensions of $\R \overset{\sigma^\vphi}{\curvearrowright} M$ and $\R \overset{\sigma^\psi}{\curvearrowright} M$, respectively. Consider the strongly continuous unitary representation
    \[
        G\ni s\mapsto u_s:= \sum_{i\in I} (s|\mu_i) p_i \in M^\vphi.
    \]
Then for $x\in M^{(\vphi,\lambda)}$ and $i,j\in I$ we have $\beta_s( p_i x p_j) = (s| \frac{\mu_i}{\mu_j} \lambda) p_i xp_j$ and
    \[
        u_s \alpha_s( p_i x p_j) u_s^* = (s|\mu_i) p_i \alpha_s(x) p_j (s|\mu_j)^{-1} = (s | \frac{\mu_i}{\mu_j} \lambda) p_i x p_j.
    \]
It follows that $\beta_s(x) = u_s \alpha_s (x) u_s^*$ for all $x\in M$ and hence $\alpha$ and $\beta$ are cocycle equivalent. Consequently, $M\rtimes_\alpha G\cong M\rtimes_\beta G$ by \cite[Theorem X.1.7.(ii)]{Tak03}. By \cite[Lemma 2.27]{GGLN}, the former is a factor since $M^\alpha = M^\vphi$ is a factor. Thus $M\rtimes_\beta G$ is a factor and we saw above that $\Sd(\psi)=\Lambda$, so the same lemma implies $M^\beta = M^\psi$ is a factor.
\end{proof}

The next lemma is a generalization of \cite[Lemma 5.4]{Haa89}) and will be needed in Section~\ref{sec:modular_cp}.

\begin{lem}\label{lem:state_preserving_embedding_of_hyperfinite_II1}
Let $M$ be a diffuse factor equipped with an extremal almost periodic state $\vphi$, and let $(R,\tau)$ be the unique separable injective type $\mathrm{II}_1$ factor equipped with its unique tracial state. For $h$ diagonlizable and affiliated with $R$, suppose $\psi=\tau(h^\frac12\,\cdot\,h^\frac12)$ is an almost periodic state on $R$ with $\Sd(\psi)\subset \Sd(\vphi)$. Then there exists a $\vphi$-invariant subfactor $P\leq M$ such that
    \[
        (R,\psi)\cong (P,\vphi|_P).
    \]
\end{lem}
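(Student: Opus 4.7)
The plan is to construct $P$ as an (infinite) von Neumann tensor product of $\vphi$-invariant finite-dimensional subfactors of $M$, each one modelled on a tensor factor in a decomposition of $(R,\psi)$. Two preliminary structural facts will be essential. First, the centralizer $M^\vphi$ is a diffuse $\mathrm{II}_1$ factor: extremality of $\vphi$ makes $M^\vphi$ a factor, and the twisted-trace identity $\vphi(vv^*) = \lambda\vphi(v^*v)$ for $v\in M^{(\vphi,\lambda)}$ rules out the finite-dimensional case (with $M^\vphi = M_n(\C)$ one can show $\Sd(\vphi)$ collapses to $\{1\}$, forcing $M = M^\vphi$ to be finite-dimensional and contradicting diffuseness). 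Second, given $\lambda \in \Sd(\vphi)$ and projections $p,q \in M^\vphi$ with the trace compatibility $\vphi(q) = \lambda \vphi(p)$, a Zorn's lemma argument produces a partial isometry $v \in M^{(\vphi,\lambda)}$ with $v^*v = p$ and $vv^* = q$; the key step is that for any nonzero projections $p',q' \in M^\vphi$ and $\lambda \in \Sd(\vphi)$, the corner $q'M^{(\vphi,\lambda)}p'$ is nonzero, which follows by combining any nonzero partial isometry in $M^{(\vphi,\lambda)}$ with appropriate partial isometries in the factor $M^\vphi$.

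I would first decompose $h = \sum_n \mu_n q_n$ with $\{q_n\}$ mutually orthogonal projections in $R^\psi$ summing to $1$, so that $\psi(q_n) = \mu_n\tau(q_n)$ and $\mu_n/\mu_m \in \Sd(\psi) \subset \Sd(\vphi)$ for all $n,m$. Using the self-similarity $(R,\psi) \cong (R\otimes R, \psi\otimes \tau)$ to introduce an auxiliary diffuse tracial factor (which allows arbitrarily fine subdivision of projections without changing $\Sd(\psi)$), I would express $(R,\psi) \cong \bigotimes_{k\in \N}(F_k,\omega_k)$, where each $F_k$ is a finite matrix algebra and $\omega_k$ has a diagonal density matrix whose eigenvalue ratios all lie in $\Sd(\psi)$. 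This is the almost periodic analogue of the Powers tensor decomposition.

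Next, I would inductively embed $(F_k, \omega_k)$ as a $\vphi$-invariant subfactor $\tilde F_k \leq M$ commuting with $\tilde F_1,\ldots,\tilde F_{k-1}$. At stage $k$, Lemma~\ref{lem:corners_of_ext_ap_weights_are_ext_ap} applied to the tensor factorization of $(M,\vphi)$ induced by $\tilde F_1 \vee \cdots \vee \tilde F_{k-1}$ shows that the relative commutant $N_k := (\tilde F_1 \vee \cdots \vee \tilde F_{k-1})' \cap M$ is itself a diffuse factor equipped with an extremal almost periodic state of point spectrum $\Sd(\vphi)$, so both structural facts above remain available inside $N_k$. Choosing diagonal projections in $N_k^{\vphi|_{N_k}}$ with $\vphi$-values dictated by $\omega_k$ and gluing them together via partial isometries from the appropriate eigenspaces $N_k^{(\vphi|_{N_k},\nu)}$ produces $\tilde F_k$. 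Setting $P := \bigvee_k \tilde F_k$ yields the desired $\vphi$-invariant subfactor with $(P, \vphi|_P) \cong (R,\psi)$ state-preservingly.

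The main obstacle is the tensor-product decomposition of $(R,\psi)$ asserted in the second paragraph. In the periodic case covered by \cite[Lemma 5.4]{Haa89} this is immediate because all eigenvalues of $h$ lie in a cyclic group $\lambda^\Z$, but in the general almost periodic setting the eigenvalues may form a dense subgroup of $\R_+$, requiring a more delicate combinatorial grouping into finite matrix-algebra blocks. The $R \cong R \otimes R$ self-similarity supplies the extra room needed to subdivide projections arbitrarily in an auxiliary tracial factor, allowing the decomposition to be carried out by successive finite approximations.
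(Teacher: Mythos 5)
Your proposal takes a genuinely different route from the paper, and the difference matters: the paper avoids the obstacle you yourself flag, while your plan leaves it open.

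The paper's proof does not decompose $(R,\psi)$ at all. Instead, after diagonalizing $h=\sum_i\mu_i p_i$, it picks a partition of unity $\{q_i\}\subset M^\vphi$ with $\vphi(q_i)=\psi(p_i)=\mu_i\tau(p_i)$ (using that $M^\vphi$ is a $\mathrm{II}_1$ factor), sets $k=\sum_i\mu_i q_i$, and forms the Radon--Nikodym perturbation $\omega:=\vphi(k^{-1/2}\,\cdot\,k^{-1/2})$. Lemma~\ref{lem:ext_ap_gives_ext_ap_for_nice_RD_deriv} guarantees $\omega$ is an extremal almost periodic weight (a state, by the trace normalization), so $M^\omega$ is again a $\mathrm{II}_1$ factor. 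One then embeds $R$ into $M^\omega$ via an ordinary trace-preserving embedding, conjugates by a unitary of $M^\omega$ so that the image of $p_i$ becomes $q_i$, and observes that the resulting $\beta\colon R\to P\leq M^\omega$ carries $h$ to $k$; hence $\vphi\circ\beta=\omega(k^{1/2}\beta(\cdot)k^{1/2})=\tau(h^{1/2}\,\cdot\,h^{1/2})=\psi$. Invariance of $P$ under $\sigma^\vphi$ then drops out of the Connes cocycle identity $\sigma_t^\vphi=\Ad{k^{it}}\circ\sigma_t^\omega$ because $k^{it}\in P$. This is a one-shot argument: no induction, no tensor decomposition, no infinite product.

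Your plan, by contrast, hinges on writing $(R,\psi)\cong\bigotimes_k(F_k,\omega_k)$ with finite-dimensional factors $F_k$ and states $\omega_k$ whose eigenvalue ratios lie in $\Sd(\psi)$. This is precisely the nontrivial step and it is not established. The self-similarity $R\cong R\bar\otimes R$ is a statement about the algebra, not about the pair $(R,\psi)$: it is not clear that $\psi$ corresponds to $\psi\otimes\tau$ (or to any product state) under such an isomorphism, and when $\Sd(\psi)$ is dense the combinatorics of splitting $h$ into a commuting infinite tensor product of finite density matrices, each with ratios in $\Sd(\psi)$ and with the multiplicities forced by $\tau(p_i)$, is far from automatic. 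In effect you are assuming a structural result for $(R,\psi)$ that is at least as hard as the lemma being proved. The inductive embedding machinery that follows (embedding each $F_k$ into the relative commutant using Lemma~\ref{lem:corners_of_ext_ap_weights_are_ext_ap} and eigenspace partial isometries) is sound in spirit and closely resembles what the paper does in Section 5 for Theorem~\ref{thm:A}, but it cannot rescue the missing decomposition. I would recommend replacing the decomposition step with the Radon--Nikodym perturbation argument, which gets the whole of $R$ into a single centralizer in one move.
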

\begin{proof}
Let
    \[
        h= \sum_{i\in I} \mu_i p_i
    \]
be the diagonalization of $h$, so that $p_i\in R$ for each $i\in I$ and $\Sd(\psi)=\{\frac{\mu_i}{\mu_j}\colon i,j\in I\}$. Since $M^\vphi$ is a  type $\mathrm{II}_1$ factor (see \cite[Lemma 3.1]{GGLN}), we can find a partition of unity $\{q_i \colon i\in I\}\subset M^\vphi$ satisfying $\vphi(q_i) = \psi(p_i) = \mu_i \tau(p_i)$ for all $i\in I$. Set
    \[
        k:= \sum_{i\in I} \mu_i q_i
    \]
and define $\omega:=\vphi(k^{-\frac12}\,\cdot\,k^{-\frac12})$. Then $\omega$ is an extremal almost periodic weight on $M$ by Lemma~\ref{lem:ext_ap_gives_ext_ap_for_nice_RD_deriv}. In fact, $\omega$ is a state because
    \[
        \omega(1) = \sum_{i\in I} \frac{1}{\mu_i} \vphi(q_i) = \sum_{i\in I} \tau(p_i) = 1.
    \]
Thus $M^\omega$ is a type $\mathrm{II}_1$ factor, and so we can find an embedding $\alpha\colon R\to M^\omega$ and set $P:=\alpha(M)$. Conjugating by a unitary in $M^\omega$ if necessary, we may assume $q_i\in P$ for each $i\in I$. We have $\omega\circ \alpha = \tau$ by uniqueness of the trace on $R$, so in particular for $f_i:=\alpha(p_i)$ we have
    \[
        \omega(f_i)= \tau(p_i) = \omega(q_i).
    \]
Thus $f_i$ is equivalent to $q_i$ in $P$, so let $u_i\in P$ be a partial isometry satisfying $u_i^* u_i=f_i$ and $u_iu_i^*=q_i$ and set $u:=\sum_{i\in I} u_i$. Then $u$ is a unitary satisfying $u f_i u^* = q_i$ for all $i\in I$, and hence $\beta:=\Ad{u}\circ \alpha$ is a unital $*$-isomorphism from $R$ to $P$ that further satisfies
    \[
        \beta(h)= \sum_{i\in I} \mu_i u\alpha(p_i)u^* = \sum_{i\in I} \mu_i q_i = k.
    \]
Consequently, for $x\in R$ we have
    \[
        \vphi(\beta(x))= \omega( k^{\frac12} \beta(x) k^{\frac12}) = \omega(\beta( h^{\frac12} x h^{\frac12})) = \omega(\alpha( h^{\frac12} x h^{\frac12})) = \tau(h^{\frac12} x h^{\frac12})=\psi(x),
    \]
where have used $u\in P\subset  M^\omega$ in the third equality. Therefore $(R,\psi)\cong (P,\vphi|_P)$. Finally, note that the Connes cocycle derivative theorem implies $\sigma_t^\vphi(x) = k^{it} \sigma_t^\omega(x) k^{-it}$, and since $k^{it}\in P$ for all $t\in \R$ it follows that $P$ is $\vphi$-invariant.
\end{proof}

Let $(M,\vphi)$ and $(R,\tau)$ be as in the pervious lemma. Suppose $\psi$ is a faithful state on $M_d(\C)$ with $\Sd(\psi)\subset \Sd(\vphi)$. Then $\psi= \tr_d(h^{\frac12}\,\cdot \, h^{\frac12})$ for some necessarily diagonalizable positive definite matrix $h\in M_d(\C)$, where $\tr_d$ is the normalized trace. Since $(M_d(\C)\otimes R, \tr_d\otimes \tau)\cong (R,\tau)$, we can apply the previous lemma to $\psi\otimes \tau$ to obtain the following almost periodic analogue of \cite[Proposition 4.6]{Haa89}. Note that $M_d(\C)\cong M_d(\C)\otimes \C$ is a $(\psi\otimes \tau)$-invariant subfactor of $M_d(\C)\otimes R$, and thus its image in $P$ will be $\vphi$-invariant.

\begin{lem}\label{lem:modular_embedding_matrix_algebra}
Let $M$ be a diffuse factor equipped with an extremal almost periodic state $\vphi$. For $d\in \N$ let $\psi$ be a faithful state on $M_d(\C)$ with $\Sd(\psi) \subset \Sd(\vphi)$. Then there exists a $\vphi$-invariant subfactor $F\leq M$ such that
    \[
        (M_d(\C),\psi)\cong (F, \vphi|_F).
    \]
\end{lem}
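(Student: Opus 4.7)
The plan is to reduce this directly to Lemma~\ref{lem:state_preserving_embedding_of_hyperfinite_II1} by tensoring with the hyperfinite type $\mathrm{II}_1$ factor, as hinted in the paragraph preceding the statement.

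First, I would write $\psi = \tr_d(h^{\frac12}\,\cdot\,h^{\frac12})$ for a positive definite matrix $h\in M_d(\C)$. Because $h$ is a self-adjoint matrix, it is automatically diagonalizable, so writing $h=\sum_{k=1}^d \mu_k e_{k,k}$ in a suitable basis exhibits $\Sd(\psi) = \{\mu_k/\mu_\ell\colon 1\leq k,\ell\leq d\}$. Now consider the state $\psi\otimes \tau$ on $M_d(\C)\bar\otimes R$. Under the canonical state-preserving identification $(M_d(\C)\bar\otimes R, \tr_d\otimes \tau)\cong (R,\tau)$, the state $\psi\otimes \tau$ becomes $\tau((h\otimes 1)^{\frac12}\,\cdot\,(h\otimes 1)^{\frac12})$, i.e.\ of the form $\tau(\tilde h^{\frac12}\,\cdot\,\tilde h^{\frac12})$ for a positive operator $\tilde h$ in $R$ that inherits the diagonalization of $h$. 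Its point spectrum is $\Sd(\psi\otimes \tau) = \Sd(\psi)\subset \Sd(\vphi)$, so the hypotheses of Lemma~\ref{lem:state_preserving_embedding_of_hyperfinite_II1} are met.

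Next I would apply that lemma to obtain a $\vphi$-invariant subfactor $P\leq M$ together with a state-preserving $*$-isomorphism $\theta\colon (R,\psi\otimes \tau)\to (P,\vphi|_P)$, where I am tacitly using the identification from the previous paragraph. Setting $F:=\theta(M_d(\C)\otimes \C)$ gives a finite-dimensional subfactor of $P$ that is $*$-isomorphic to $M_d(\C)$. To verify that $(F,\vphi|_F)\cong (M_d(\C),\psi)$ I would observe that $(M_d(\C)\bar\otimes R, \psi\otimes \tau)$ factors as an honest tensor product of states, so the subfactor $M_d(\C)\otimes \C$ satisfies condition (3) from the preliminaries; transporting through $\theta$ then yields $(P,\vphi|_P)=(F,\vphi|_F)\otimes (F^c,\vphi|_{F^c})$, which implies in particular that $\vphi|_F$ corresponds to $\psi$ under the induced isomorphism $F\cong M_d(\C)$.

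Finally, for the $\vphi$-invariance of $F$ in $M$, I would chain the two conditional expectations: $P$ admits a $\vphi$-preserving faithful normal conditional expectation $E_P\colon M\to P$ because $P$ is $\vphi$-invariant, and $F$ admits a $\vphi|_P$-preserving faithful normal conditional expectation $E_F\colon P\to F$ from the tensor factorization above; their composition $E_F\circ E_P$ is a $\vphi$-preserving faithful normal conditional expectation from $M$ onto $F$, which is the desired $\vphi$-invariance. There is no real obstacle here—the entire argument is essentially a bookkeeping exercise wrapping Lemma~\ref{lem:state_preserving_embedding_of_hyperfinite_II1} together with the standard equivalence between state factorization and $\Delta_\vphi$-invariance. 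The only place that requires mild care is keeping track of the two layers of isomorphism (the identification $M_d(\C)\bar\otimes R\cong R$ and the embedding $\theta$) and verifying that they respect both the state and the tensor factorization.
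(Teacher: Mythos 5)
Your proposal is correct and follows essentially the same route the paper uses: tensor $(M_d(\C),\psi)$ with $(R,\tau)$, identify the result with $(R,\psi\otimes\tau)$, apply Lemma~\ref{lem:state_preserving_embedding_of_hyperfinite_II1}, and take $F$ to be the image of $M_d(\C)\otimes\C$, with $\vphi$-invariance coming from chaining the two state-preserving conditional expectations.
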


\section{Modular completely positive factorizations through matrix algebras}\label{sec:modular_cp}

Given a $(N,\vphi)$, a diffuse separable injective factor equipped with an extremal almost periodic state, we show in this section that the identity map on $N$ can be pointwise approximated in $\|\cdot\|_\vphi$-norm by a composition of completely maps to and from matrix algebras, which also respect the modular theory of $\vphi$. This result, Thoerem~\ref{thm:modular_finite_dimensional_approximations}, corresponds to \cite[Theorem 5.1]{Haa89} and will be needed in the proof of Theorem~\ref{thm:A}.

As an initial step, we first show such an approximations exist for completely positive maps to and from the unique separable injective type $\mathrm{II}_1$ factor $R$. The following lemma, which corresponds to \cite[Lemma 5.3]{Haa89}, is where the hypothesis of almost periodicity is most heavily used in the form of the point modular extension of the modular automorphism group of $\vphi$ and the existence of the idempotent maps $\E_{(\vphi,\lambda)}$ for $\lambda\in \Sd(\vphi)$.

\begin{lem}\label{lem:approximate_cp_embeddings_into_R}
Let $N$ be a diffuse separable injective factor, let $\vphi$ be an extremal almost periodic state on $N$, and let $(R,\tau)$ be the unique separable injective type $\rm{II}_1$ factor equipped with its unique tracial state. For every finite set $x_1,\ldots, x_n \in N$,  finite set $E\subset \Sd(\vphi)$, and $\epsilon>0$, there exist normal unital completely positive maps
    \[
        S\colon N\to R \qquad \text{ and } \qquad T\colon R\to N,
    \]
and a faithful state $\psi=\tau(a^\frac12\,\cdot\, a^\frac12)$ on $R$ with $E\subset \Sd(\psi)\subset \Sd(\vphi)$, where $a\in R_+$ has finite spectrum.
Moreover, one has
    \begin{align*}
        \psi\circ S&=\vphi,  & \vphi\circ T&=\psi,\\
        \sigma_t^{\psi}\circ S & = S\circ \sigma_t^\vphi, & \sigma_t^\vphi\circ T &= T\circ \sigma_t^{\psi},
    \end{align*}
for all $t\in \mathbb{R}$, and
    \[
        \| T\circ S(x_k) - x_k \|_\vphi <\epsilon \qquad k=1,\ldots, n.
    \]
\end{lem}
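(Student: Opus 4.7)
The plan is to build $S$ and $T$ by combining injectivity of $N$ (Connes' characterization, yielding CP factorizations of the identity through finite-dimensional algebras) with the matrix-subfactor constructions of Lemmas~\ref{lem:modular_embedding_matrix_algebra} and \ref{lem:state_preserving_embedding_of_hyperfinite_II1}, and then forcing modular equivariance by averaging against the point modular extension of $\sigma^\vphi$. As a first reduction, since $\vphi$ is almost periodic one has $\|x_k\|_\vphi^2 = \sum_\lambda\|\E_{(\vphi,\lambda)}(x_k)\|_\vphi^2$, so a finite set $F\subset \Sd(\vphi)$ containing $E$ can be chosen with $\bigl\|x_k - \sum_{\lambda\in F}\E_{(\vphi,\lambda)}(x_k)\bigr\|_\vphi<\epsilon/3$ for each $k$; replacing each $x_k$ with its truncation, I may assume each $x_k$ is a finite sum of eigenoperators with eigenvalues in $F$.

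Next I would choose the target $(R,\psi)$ and build $T$. Pick a faithful state $\rho$ on some $M_d(\C)$ with $\Sd(\rho)$ finite, containing $E$, and contained in $\Sd(\vphi)$; via the identification $R\cong M_d(\C)\otimes R$ the product state $\psi:=\rho\otimes\tau$ takes the required form $\tau(a^{1/2}\,\cdot\,a^{1/2})$ with $a$ of finite spectrum. Apply Lemma~\ref{lem:modular_embedding_matrix_algebra} to embed $(M_d(\C),\rho)$ as a $\vphi$-invariant subfactor $F_0\leq N$; observe via Lemma~\ref{lem:corners_of_ext_ap_weights_are_ext_ap} that $\vphi|_{F_0^c}$ is extremal almost periodic with $\Sd(\vphi|_{F_0^c})=\Sd(\vphi)$; and apply Lemma~\ref{lem:state_preserving_embedding_of_hyperfinite_II1} (with $h=1$) to embed $(R,\tau)$ as a $\vphi|_{F_0^c}$-invariant subfactor $Q\leq F_0^c$. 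The product $P:=F_0\vee Q$ is then a $\vphi$-invariant subfactor of $N$ with $(P,\vphi|_P)\cong (M_d(\C)\otimes R,\rho\otimes\tau)\cong (R,\psi)$, giving a modular-intertwining state-preserving embedding $T:R\to N$.

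To construct $S$, I would apply injectivity of $N$ to obtain normal UCP factorizations of the identity on $N$ through matrix algebras $M_{d'}(\C)$, pointwise $\|\cdot\|_\vphi$-approximating the $x_k$. Absorbing $M_{d'}(\C)$ into $R$ via $M_{d'}(\C)\otimes R\cong R$ and routing through $T(R)\leq N$ produces a candidate $S:N\to R$ with $T\circ S(x_k)\approx x_k$. This $S$ need not intertwine the modular automorphism groups, so I would enforce $\sigma_t^\psi\circ S = S\circ\sigma_t^\vphi$ by averaging $S$ against the compact dual group $G$ of $\Sd(\vphi)$ via the point modular extension of $\sigma^\vphi$. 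Compactness of $G$ (a consequence of almost periodicity) makes the averaging well-defined, and because $G$ acts by scalars on each eigenspace, averaging preserves both the approximation $T\circ S(x_k)\approx x_k$ (as the $x_k$ lie in finitely many eigenspaces) and the state-preservation $\psi\circ S=\vphi$.

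The main obstacle is this modular-averaging step. Naive injective CP approximations do not respect the modular structure, so modular equivariance must be imposed by averaging; in the periodic setting of \cite[Lemma 5.3]{Haa89} one averages over a circle via the periodic modular action, but here the modular $\R$-action on $N$ is typically aperiodic (since $\Sd(\vphi)$ need not be cyclic), forcing the use of the point modular extension to the compact dual of $\Sd(\vphi)$. Tracking how this averaging interacts with the finite eigenspace support of the $x_k$ and with the state-preservation conditions $\psi\circ S=\vphi$ and $\vphi\circ T=\psi$ is the delicate technical step, and is where the almost-periodicity hypothesis is most essential.
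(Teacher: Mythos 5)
Your construction of $T$ as a state-preserving, modular-intertwining $*$-homomorphism via Lemmas~\ref{lem:modular_embedding_matrix_algebra} and \ref{lem:state_preserving_embedding_of_hyperfinite_II1} is sound, and the modular-averaging argument is correct in principle: averaging $S_0$ against the point modular extension over the compact dual $G$ of $\Sd(\vphi)$ does preserve state-preservation (if one already has it) and, since $\E_{(\vphi,\lambda)}$ is a $\|\cdot\|_\vphi$-contraction and each truncated $x_k$ sits in finitely many eigenspaces, it also preserves the approximation $T\circ S_0(x_k)\approx x_k$. The gap lies entirely in the production of $S_0$. Connes' injectivity characterization supplies UCP maps $\alpha\colon N\to M_{d'}(\C)$ and $\beta\colon M_{d'}(\C)\to N$ with $\beta\circ\alpha\approx\mathrm{id}$ pointwise, but $\beta$ has no relationship to your prescribed $T$; "absorbing $M_{d'}(\C)$ into $R$ and routing through $T(R)$" replaces $\beta$ with $T$ composed with an embedding, and there is no reason this composition still approximates the identity on the $x_k$. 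Moreover, nothing forces the raw injectivity map $\alpha$ to satisfy $\psi\circ S_0=\vphi$, and the $G$-averaging preserves state-preservation but does not create it. The natural fix, taking $S_0 := T^{-1}\circ\E_{T(R)}$ (where $\E_{T(R)}$ is the $\vphi$-preserving expectation onto the $\vphi$-invariant subfactor $T(R)$), gives both state-preservation and modular equivariance automatically; but then $T\circ S_0=\E_{T(R)}$ has no reason to be close to the identity on the $x_k$, because $T(R)$ has a large relative commutant in $N$.

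The paper avoids CP injectivity factorizations entirely for this lemma. Instead it passes to $N\otimes M_p(\C)^{\otimes d}$ equipped with $\chi=\vphi\otimes\omega$, where $\omega$ is a matrix state whose density matrix has eigenvalues forming a block $\{\lambda_1^{r_1}\cdots\lambda_d^{r_d}\colon 1\leq r_i\leq p\}$ of the lattice generated by a basis of $\langle E\rangle$. The centralizer of $\chi$ is a separable injective diffuse $\mathrm{II}_1$ factor, hence $\cong R$ (the only place injectivity enters, and softly). Taking $S$ to be tensoring by a rank-one projection followed by $\E_\chi$, and $T$ to be a slice map, one gets $T\circ S$ equal to a Fejér-kernel average $\sum_{|k_i|<p}\prod_i(1-\tfrac{|k_i|}{p})\,\E_{(\vphi,\lambda_1^{k_1}\cdots\lambda_d^{k_d})}$, which converges to the identity in $\|\cdot\|_\vphi$ by almost periodicity. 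This construction builds in both state-preservation and modular equivariance from the start, and the Fejér mechanism is what guarantees the pointwise approximation that your routing-through-$T(R)$ step fails to deliver.
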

\begin{proof}
Enlarging $E$ if necessary, we may assume that
    \begin{align}\label{ineq:approx_by_eigenops}
        \left\| \sum_{\gamma\in E} \mathcal{E}_{(\vphi,\gamma)}(x_k) - x_k \right\|_\vphi < \frac{\epsilon}{2},
    \end{align}
for each $k=1,\ldots, n$. The multiplicative group generated by $E$ is finitely generated and torsion free, so it is free abelian and therefore has a basis. Let $\lambda_1,\ldots, \lambda_d\subset Sd(\vphi)$ be such such a basis. 
That is, $E\subset \{ \lambda_1^{r_1}\cdots \lambda_d^{r_d}\colon r_1,\ldots, r_d\in \Z\}$ and $\lambda_1^{r_1}\cdots \lambda_d^{r_d} = \lambda_1^{s_1}\cdots \lambda_d^{s_d}$ if and only if $r_1=s_1,\ldots, r_d=s_d$.

Now, for $p\in \N$ consider the density matrix
    \[
        h:= \frac{1}{\sum_{r_1,\ldots, r_d=1}^p \lambda_1^{r_1}\cdots \lambda_d^{r_d}} \sum_{r_1,\ldots, r_d=1}^p \lambda_1^{r_1}\cdots \lambda_d^{r_d} e_{r_1r_1}\otimes \cdots \otimes e_{r_d r_d} \in M_p(\C)^{\otimes d},
    \]
and let $\omega:= \Tr(h\,\cdot\,)$ be the associated state on $M_p(\C)^{\otimes d}$. Observe that
    \[
        \sigma_t^\omega( e_{r_1s_1}\otimes \cdots \otimes e_{r_d s_d}) = \lambda_1^{i(r_1 - s_1)t} \cdots \lambda_d^{i(r_d - s_d)t} e_{r_1s_1}\otimes \cdots \otimes e_{r_d s_d}.
    \]
Hence
    \[
        \Sd(\omega) = \{ \lambda_1^{r_1}\cdots \lambda_d^{r_d}\colon -p< r_1,\ldots, r_d < p\} \subset Sd(\vphi).
    \]
Lemma~\ref{lem:tensor_of_extremal_is_extremal} then implies that the centralizer of $N\otimes M_p(\C)^{\otimes d}$ with respect to $\chi:=\vphi\otimes \omega$ isomorphic to $R$ and $\Sd(\chi) = \Sd(\vphi)$. Equipping $\Sd(\vphi)$ with the discrete topology, let $G$ be its compact dual group with normalized Haar measure $\mu$, and let $G\overset{\alpha}{\curvearrowright} N$ and $G\overset{\beta}{\curvearrowright} N\otimes M_p(\C)^{\otimes d}$ be the point modular extensions $\R \overset{\sigma^\vphi}{\curvearrowright} N$ and $\R \overset{\sigma^\chi}{\curvearrowright} N\otimes M_p(\C)^{\otimes d}$, respectively. Then
    \[
        \beta_t(x\otimes e_{r_1s_1}\otimes \cdots \otimes e_{r_ds_d}) = (t\mid \lambda_1^{r_1-s_1}\cdots \lambda_d^{r_d-s_d}) \alpha_t(x)\otimes e_{r_1s_1}\otimes \cdots \otimes e_{r_ds_d},
    \]
so that
    \begin{align*}
        (N&\otimes M_p(\C)^{\otimes d})^\chi \\
        =& \left\{\sum_{r_1,s_1,\ldots, r_d,s_d=1}^p x_{r_1,s_1,\ldots r_d,s_d}\otimes e_{r_1s_1}\otimes\cdots \otimes e_{r_ds_d} \in N\otimes M_p(\C)^{\otimes d}  \colon  x_{r_1,s_1,\ldots r_d,s_d}\in N^{(\vphi,\lambda_1^{s_1-r_1}\cdots \lambda_d^{s_d-r_d})} \right\}
    \end{align*}
Also observe that for $\{x_{r_1,s_1,\ldots r_d,s_d}\in N\colon 1\leq r_1,s_1\ldots, r_d,s_d\leq p\}$ one has
    \begin{align*}
        \mathcal{E}_\chi &\left(\sum_{r_1,s_1,\ldots, r_d,s_d=1}^p x_{r_1,s_1,\ldots r_d,s_d}\otimes e_{r_1s_1}\otimes\cdots \otimes e_{r_ds_d} \right)\\
        &=\sum_{r_1,s_1,\ldots, r_d,s_d=1}^p \int_G ( \lambda_1^{r_1-s_1}\cdots \lambda_d^{r_d - s_d}\mid t)  \alpha_t(x_{r_1,s_1,\ldots r_d,s_d})\otimes e_{r_1s_1}\otimes \cdots \otimes e_{r_ds_d}\ d\mu(t)\\
        &=\sum_{r_1,s_1,\ldots, r_d,s_d=1}^p \int_G \overline{(\lambda_1^{s_1-r_1}\cdots \lambda_d^{s_r-r_d}\mid t)}  \alpha_t(x_{r_1,s_1,\ldots r_d,s_d})\otimes e_{r_1s_1}\otimes \cdots \otimes e_{r_ds_d}\ d\mu(t)\\
        &= \sum_{r_1,s_1,\ldots, r_d,s_d=1}^p \mathcal{E}_{(\vphi,\lambda_1^{s_1-r_1}\cdots \lambda_d^{s_d - r_d})}(x_{r_1,s_1,\ldots r_d,s_d}) \otimes e_{r_1s_1}\otimes \cdots \otimes e_{r_ds_d}.
    \end{align*}
Now, identifying $R\cong (N\otimes M_p(\C)^{\otimes d})^\chi$ we define $S\colon N\to R$ and $T\colon R\to N$ by
    \[
        S(x):= \sum_{r_1,s_1,\ldots, r_d,s_d=1}^p \mathcal{E}_{(\vphi,\lambda_1^{s_1-r_1}\cdots \lambda_d^{s_d - r_d})}(x) \otimes e_{r_1s_1}\otimes \cdots \otimes e_{r_ds_d}
    \]
for $x\in N$, and
    \[
        T\left( \sum_{r_1,s_1,\ldots, r_d,s_d=1}^p x_{r_1,s_1,\ldots r_d,s_d}\otimes e_{r_1s_1}\otimes\cdots \otimes e_{r_ds_d} \right) := \frac{1}{p^d} \sum_{r_1,s_1,\ldots, r_d,s_d=1}^p x_{r_1,s_1,\ldots r_d,s_d}
    \]
for $x_{r_1,s_1,\ldots r_d,s_d}\in N^{(\vphi,\lambda_1^{s_1-r_1}\cdots \lambda_d^{s_d-r_d})}$. Then $S$ and $T$ are unital by definition, and if we denote
    \[
        e_0:= \frac{1}{p^d} \sum_{r_1,s_1,\ldots, r_d,s_d=1}^p e_{r_1s_1}\otimes\cdots \otimes e_{r_ds_d},
    \]
then $S= p^d \mathcal{E}_{\chi}(\,\cdot\,\otimes e_0)$ and $T= (\text{id}_N\otimes \text{Tr})((1\otimes e_0) \,\cdot\, )$ are normal and completely positive. Moreover, for each $x\in N$ we have
    \[
        T\circ S(x) =  \frac{1}{p^d} \sum_{r_1,s_1,\ldots, r_d,s_d=1}^p \mathcal{E}_{(\vphi,\lambda_1^{s_1-r_1}\cdots \lambda_d^{s_d - r_d})}(x) = \sum_{|k_1|,\ldots, |k_d|<p} \left( 1- \frac{|k_1|}{p}\right)\cdots \left( 1 - \frac{|k_d|}{p}\right) \mathcal{E}_{(\vphi,\lambda_1^{k_1}\cdots \lambda_d^{k_d})}(x).
    \]
Suppose $p$ is large enough so that
    \[
        E\subset \{ \lambda_1^{k_1}\cdots \lambda_d^{k_d}\colon |k_1|,\ldots, |k_d|<p\}.
    \]
If we denote $y_k:= \sum_{\gamma\in E} \mathcal{E}_{(\vphi,\gamma)}(x_k)$ for each $k=1,\ldots, n$, then we have
    \begin{align*}
        \|T\circ S(x_k) - x_k\|_\vphi &\leq \|T\circ S(x_k - y_k)\|_\vphi + \|T\circ S(y_k) - y_k \|_\vphi + \|y_k - x_k\|_\vphi\\
        &< \epsilon + \left\| \sum_{\lambda_1^{k_1}\cdots \lambda_d^{k_d}\in E} \left[1- \left( 1- \frac{|k_1|}{p}\right)\cdots \left( 1 - \frac{|k_d|}{p}\right) \right] \mathcal{E}_{(\vphi,\lambda_1^{k_1}\cdots \lambda_d^{k_d})}(x_k) \right\|_\vphi,
    \end{align*}
where we have used (\ref{ineq:approx_by_eigenops}) and the fact that $\|T\circ S(x)\|_\vphi\leq \|x\|_\vphi$ (which follows from $T\circ S\colon N\to N$ being a unital completely positive map). Thus for sufficiently large $p$ one has 
    \[
        \|T\circ S(x_k) - x_k\|_\vphi < \epsilon
    \]
for each $k=1,\ldots, n$.

Finally, set $\psi:= \vphi\circ T$. Then for $x\in N$ one has
    \[
        \psi\circ S(x) = \vphi\circ T\circ S(x) =  \frac{1}{p^d} \sum_{r_1,s_1,\ldots, r_d,s_d=1}^p \vphi\circ\mathcal{E}_{(\vphi,\lambda_1^{s_1-r_1}\cdots \lambda_d^{s_d - r_d})}(x) = \frac{1}{p^d} \sum_{r_1,\ldots, r_d=1}^p \vphi(x) = \vphi(x).
    \]
Similarly, for $x_{r_1,s_1,\ldots r_d,s_d}\in N^{(\vphi,\lambda_1^{s_1-r_1}\cdots \lambda_d^{s_d-r_d})}$ one has
    \begin{align*}
        \psi&\left( \sum_{r_1,s_1,\ldots, r_d,s_d=1}^p x_{r_1,s_1,\ldots r_d,s_d}\otimes e_{r_1s_1}\otimes\cdots \otimes e_{r_ds_d}\right) \\
        &\qquad = \frac{1}{p^d} \sum_{r_1,s_1,\ldots, r_d,s_d=1}^p \vphi(x_{r_1,s_1,\ldots r_d,s_d}) = \frac{1}{p^d} \sum_{r_1,\ldots, r_d=1}^p \vphi(x_{r_1,r_1,\ldots r_d,r_d}).
    \end{align*}
Hence $\psi = \frac{1}{p^d}(\vphi\otimes \text{Tr})$. Note that this implies $\sigma_t^\psi = \sigma^{\vphi}_t\otimes \text{id}$, and thus by definition of $S$ and $T$ one has
    \begin{align*}
        \sigma_t^\psi\circ S = S\circ \sigma_t^\vphi \\
        \sigma_t^\vphi \circ T = T\circ \sigma_t^\psi.
    \end{align*}
Recalling that $\chi = \vphi\otimes \omega = (\vphi\otimes \text{Tr})( (1\otimes h)\,\cdot\,)$, we see that if
    \[
        a: = \frac{1}{p^d}(1\otimes h^{-1}),
    \]
then $\psi = \chi(a\,\cdot\,)$. Since $(R,\tau)\cong ( (N\otimes M_p(\C)^{\otimes d})^\chi, \chi)$, this implies $\Sd(\psi)$ is the finite set consisting of the ratios of eigenvalues of $h$ and so $E\subset \Sd(\psi)\subset \Sd(\vphi)$.
\end{proof}

We require some additional notation for the proof of the main theorem in this section. For a finite subset $E\subset \R_+$ we denote
    \begin{align*}
        (\mathbb{M}_E, \phi_E)&:=\bigotimes_{\lambda\in E} (M_2(\C),\phi_\lambda)\\
         (R_E,\varphi_E)&:=\overline{\bigotimes_{\lambda\in E}} (R_\lambda,\vphi_\lambda).
    \end{align*}
Note that $\vphi_E$ is extremal almost periodic and
    \begin{align}\label{eqn:Sd_invariant_of_matrix_state}
        \Sd(\phi_E) = \prod_{\lambda\in E} \Sd(\phi_\lambda) = \left\{ \prod_{\lambda\in E} \lambda^{e(\lambda)}\colon e(\lambda)\in \{-1,0,1\}\ \forall \lambda\in E \right\} \leq \<E\> = \Sd(\vphi_E).
    \end{align}
In particular, $\Sd(\phi_E)$ is still a finite set. For any $m\in \N$ we have a state-preserving embedding
    \[
       (\mathbb{M}_E^{\otimes m}, \phi_E^{\otimes m}) \hookrightarrow (R_E, \vphi_E),
    \]
given by mapping the $m$ tensor factors of $(M_2(\C),\phi_\lambda)$ into the first $m$ tensor factors of 
    \[
        (R_\lambda,\vphi_\lambda) = \bigotimes_{n\in \N} (M_2(\C),\phi_\lambda).
    \]
It is easy to see that this image is $\vphi_E$-invariant, and so there exists a $\vphi_E$-preserving faithful normal conditional expectation $\E_{E,m}\colon R_E \to \mathbb{M}_E^{\otimes m}$.

\begin{thm}\label{thm:modular_finite_dimensional_approximations}
Let $N$ be a diffuse separable injective factor equipped with an extremal almost periodic state $\vphi$. For every finite set $x_1,\ldots, x_n\in N$, finite set $E_0\subset \Sd(\vphi)$,  and $\epsilon>0$, there exists a finite subset $E_0\subset E\subset \Sd(\vphi)$, $m\in \N$, and normal unital completely positive maps
    \[
        S\colon N\to \mathbb{M}_E^{\otimes m} \qquad \text{ and } \qquad T\colon \mathbb{M}_E^{\otimes m} \to N,
    \]
such that
    \begin{align*}
        \phi_E^{\otimes m}\circ S&= \vphi, & \vphi\circ T&= \phi_E^{\otimes m} \\
        \sigma_t^{\phi_E^{\otimes m}}\circ S & = S\circ \sigma_t^{\vphi}, & \sigma_t^\vphi \circ T &= T\circ \sigma_t^{\phi_E^{\otimes m}}
    \end{align*}
for all $t\in \R$, and
    \[
        \| T\circ S(x_k) - x_k\|_\vphi < \epsilon \qquad k=1,\ldots, n
    \]
\end{thm}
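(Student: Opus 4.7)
My plan is to follow Haagerup's strategy for \cite[Theorem 5.1]{Haa89}, factoring first through the hyperfinite II$_1$ factor via Lemma~\ref{lem:approximate_cp_embeddings_into_R} and then further refining through a matrix subfactor of that II$_1$ factor. Applying Lemma~\ref{lem:approximate_cp_embeddings_into_R} with data $(x_1,\ldots,x_n,E_0,\epsilon/2)$ yields unital CP maps $S_1\colon N\to R$ and $T_1\colon R\to N$ together with a faithful state $\psi=\tau(a^{1/2}\,\cdot\,a^{1/2})$ on $R$, where $a\in R_+$ has finite spectrum and $E_0\subset \Sd(\psi)\subset \Sd(\vphi)$, satisfying all the state-preservation and modular-intertwining identities along with $\|T_1\circ S_1(x_k)-x_k\|_\vphi<\epsilon/2$.

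The next step is to approximate each $S_1(x_k)\in R$ by its $\psi$-preserving conditional expectation onto a $\psi$-invariant subfactor $F\leq R$ with $(F,\psi|_F)\cong(\mathbb{M}_E^{\otimes m},\phi_E^{\otimes m})$, for a suitable finite $E_0\subset E\subset \Sd(\vphi)$ and $m\in\N$. Since $\Sd(\psi)$ is contained in the free abelian group $\langle\lambda_1,\ldots,\lambda_d\rangle$ extracted in the proof of Lemma~\ref{lem:approximate_cp_embeddings_into_R}, I take $E$ to be a finite set with $\{\lambda_1,\ldots,\lambda_d\}\cup E_0\subseteq E\subseteq \Sd(\vphi)$ and $m$ large enough that $\Sd(\phi_E^{\otimes m})\supset \Sd(\psi)$ (see \eqref{eqn:Sd_invariant_of_matrix_state}). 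Using hyperfiniteness of $R$, I exhaust $R$ by an increasing chain of finite-dimensional matrix subfactors in $\|\cdot\|_\tau$, which is equivalent to $\|\cdot\|_\psi$ on norm-bounded sets because $a$ has finite spectrum. After enlarging each term to contain $a$, the terms become $\psi$-invariant, and by further refinement---exploiting the type II$_1$ property of $R$ to freely subdivide projections---I arrange that $\psi|_F$ matches $\phi_E^{\otimes m}$ via some state isomorphism $\Phi\colon F\to \mathbb{M}_E^{\otimes m}$.

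Setting $S:=\Phi\circ\E_F\circ S_1$ and $T:=T_1\circ\iota\circ\Phi^{-1}$ (with $\iota\colon F\hookrightarrow R$ the inclusion and $\E_F\colon R\to F$ the $\psi$-preserving conditional expectation), state-preservation and modular intertwining follow componentwise. For the approximation estimate, the Schwarz inequality for the unital CP map $T_1$ with $\vphi\circ T_1=\psi$ gives $\|T_1(y)\|_\vphi\leq\|y\|_\psi$, so
\begin{align*}
\|T\circ S(x_k)-x_k\|_\vphi &\leq \|T_1(\E_F(S_1(x_k))-S_1(x_k))\|_\vphi + \|T_1\circ S_1(x_k)-x_k\|_\vphi \\
&\leq \|\E_F(S_1(x_k))-S_1(x_k)\|_\psi + \epsilon/2 < \epsilon
\end{align*}
once $F$ is taken far enough along the chain. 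The principal technical obstacle is the matching step: realizing $(\mathbb{M}_E^{\otimes m},\phi_E^{\otimes m})$ as a $\psi$-invariant subfactor of $R$ with precisely the correct eigenvalue multiplicities. This plausibly demands either a hands-on construction respecting the block decomposition of $R$ dictated by the spectral projections of $a$, or else a modification of the proof of Lemma~\ref{lem:approximate_cp_embeddings_into_R} that builds the matrix structure of $F$ into the construction of the intermediate II$_1$ factor from the outset.
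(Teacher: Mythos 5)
You take the same first step as the paper, applying Lemma~\ref{lem:approximate_cp_embeddings_into_R} to obtain $S_1,T_1,\psi$, and your Schwarz-type estimate and componentwise verification of the state-preservation and modular intertwining identities are also correct and match the paper's. The divergence---and the gap---is in the intermediate embedding step. You want a $\psi$-invariant finite-dimensional subfactor $F\leq R$ with $(F,\psi|_F)\cong(\mathbb{M}_E^{\otimes m},\phi_E^{\otimes m})$, and you need $F$ far enough along an exhaustion of $R$ that $\E_F\circ S_1(x_k)$ is $\epsilon/2$-close to $S_1(x_k)$. These two demands are incompatible. A state-preserving isomorphism forces $\Sd(\psi|_F)=\Sd(\phi_E^{\otimes m})$, while $\psi$-invariance of $F$ inside $R$ gives $\Sd(\psi|_F)\subset\Sd(\psi)$. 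Since $a$ has finite spectrum, $\Sd(\psi)$ is a fixed finite set; by contrast, $\Sd(\phi_E^{\otimes m})$ consists of all products $\prod_{\lambda\in E}\lambda^{e(\lambda)}$ with $|e(\lambda)|\leq m$, which grows without bound as $m\to\infty$ whenever $E\neq\{1\}$. So only finitely many $m$ are even admissible, and the expectation error cannot be driven to zero. Enlarging $E$ only makes the constraint $\Sd(\phi_E^{\otimes m})\subset\Sd(\psi)$ harder to satisfy. This is exactly the ``matching step'' you flag as the principal technical obstacle, and it is not merely delicate---it is obstructed.

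The paper resolves it by embedding in the opposite direction. Setting $E:=\Sd(\psi)$, it invokes Lemma~\ref{lem:state_preserving_embedding_of_hyperfinite_II1} to realize $(R,\psi)$ as a $\vphi_E$-invariant subfactor of the ambient Araki--Woods factor $(R_E,\vphi_E)$, whose state satisfies $\Sd(\vphi_E)=\<E\>$ and is therefore roomy enough to simultaneously contain $(R,\psi)$ and every $\mathbb{M}_E^{\otimes m}$ as $\vphi_E$-invariant subalgebras. One then sets $S'':=S_1$ regarded as valued in $R_E$, $T'':=T_1\circ\E_R$ where $\E_R\colon R_E\to R$ is the $\vphi_E$-preserving expectation, and finally $S:=\E_{E,m}\circ S''$ and $T:=T''|_{\mathbb{M}_E^{\otimes m}}$ for $m$ large enough that $\E_{E,m}$ nearly fixes each $S''(x_k)$ in $\|\cdot\|_{\vphi_E}$. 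If you replace your ``find a matrix subfactor inside $R$'' step with this ``embed $R$ into $R_E$ and approximate there'' step, the rest of your argument goes through unchanged.
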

\begin{proof}
Apply Lemma~\ref{lem:approximate_cp_embeddings_into_R} to $x_1,\ldots, x_n$, $\frac{\epsilon}{2}$, and  $E_0$ to find normal unital completely positive maps $S'\colon N\to R$ and $T'\colon R\to N$ and a faithful normal state $\psi$ on $R$ with their suite of properties. Let $E:=\Sd(\psi)$, which satisfies $E_0 \subset E \subset \Sd(\vphi)$. Applying Lemma~\ref{lem:state_preserving_embedding_of_hyperfinite_II1} to $(R_E,\vphi_E)$ and $(R,\psi)$ (note that (\ref{eqn:Sd_invariant_of_matrix_state}) gives $E\subset \Sd(\vphi_E)$), we can identify $R$ as a $\vphi_E$-invariant subalgebra of $R_E$ satisfying $\vphi_E|_R = \psi$. Consequently, there exists a faithful normal $\vphi_E$-preserving conditional expectation $\E_R: R_E\to R$, and so we define normal unital completely positive maps
    \[
        S''\colon N\to R_E \qquad \text{ and } \qquad T''\colon R_E \to N,
    \]
by letting $S''$ be $S'$ but viewed as valued in $R_E$ and letting $T'':=T'\circ \E_R$. It follows that
    \begin{align*}
        \vphi_E\circ S'' &= \psi\circ S' = \vphi\\
        \vphi\circ T'' & = \vphi\circ T'\circ  \E_R = \psi\circ \E_R = \vphi_E
    \end{align*}
and
    \begin{align*}
        \sigma_t^{\vphi_E}\circ S'' &= \sigma_t^{\psi}\circ S' = S'\circ \sigma_t^{\vphi} = S''\circ \sigma_t^{\vphi}\\
        \sigma_t^{\vphi}\circ T'' &= \sigma_t^{\vphi} \circ T'\circ \E_R = T'\circ \sigma_t^{\psi}\circ \E_R = T''\circ \sigma_t^{\vphi_E},
    \end{align*}
for all $t\in \R$. Moreover, for each $k=1,\ldots, n$ we have
    \[
        \| T''\circ S''(x_k)) - x_k\|_\vphi = \| T'\circ \E_R \circ S'(x_k) -x_k\|_\vphi = \| T' \circ S'(x_k) -x_k\|_\vphi < \frac{\epsilon}{2}.
    \]
Now, as in the discussion preceding the statement of the theorem we view $\mathbb{M}_E^{\otimes m}$ as a subfactor of $R_E$ with $\vphi_E$-preserving conditional expectation $\E_{E,m}\colon R_E \to \mathbb{M}_E^{\otimes m}$ for each $m\in \N$. Then for $x\in R_E $ we have
    \[
        \lim_{m\to\infty} \| \E_{E,m}(x) - x \|_{\vphi_E}=0.
    \]
Choose $m\in \N$ large enough so that
    \[
        \| \E_{E,m}\circ S''(x_k) - S''(x_k)\|_{\vphi_E} < \frac{\epsilon}{2},
    \]
for each $k=1,\ldots, n$. Finally, we define the desired normal unital completely positive maps $S$ and $T$ by
    \[
        S:=\E_{E,m}\circ S'' \qquad \text{ and } \qquad T:=T''|_{\mathbb{M}_E^{\otimes m}}.
    \]
The claimed relations with $\vphi$ and $\sigma_t^\vphi$, $t\in \R$, are immediate, and for each $k=1,\ldots, n$ we have
    \begin{align*}
        \| T\circ S(x_k) - x_k \|_\vphi &\leq \| (T\circ S - T''\circ S'')(x_k)\|_\vphi + \|T''\circ S''(x_k) - x_k \|_\vphi\\
        &= \| T'' \left[\E_{E,m}\circ S''(x_k) - S''(x_k)\right] \|_\vphi + \|T''\circ S''(x_k) - x_k \|_\vphi < \frac{\epsilon}{2} + \frac{\epsilon}{2} = \epsilon,
    \end{align*}
as desired.
\end{proof}

\section{Approximate Kraus representations}\label{sec:approximate_Kraus_representations}

For $(N,\vphi)$ a diffuse factor equipped with an extremal almost periodic state, the results in this section concern $\vphi$-invariant finite dimensional subfactors $F\leq N$ and normal unital completely positive maps $T\colon F\to N$ that are compatible with $\vphi$ and its modular theory. We note that Theorem~\ref{thm:modular_finite_dimensional_approximations} yields such maps after using Lemma~\ref{lem:modular_embedding_matrix_algebra} to identify $\mathbb{M}_E^{\otimes}$ with such a subfactor $F\leq N$.

Our first lemma, corresponds to \cite[Lemma 4.4]{Haa89}, which  shows such maps $T$ admit Kraus representations that use only elements of the centralizer $N^\vphi$.

\begin{lem}\label{lem:centralizer_Kraus_rep}
Let $N$ be a factor equipped with an extremal almost periodic state $\vphi$, and let $F\leq N$ be a finite dimensional subfactor for which
    \[
        \vphi= \vphi|_{F} \otimes \vphi|_{F^c}.
    \]
If $T\colon F\to N$ is a normal completely positive map satisfying $\sigma_t^\vphi\circ T = T\circ \sigma_t^{\vphi|_F}$, then there exists a finite set $a_1,\ldots, a_n\in N^\vphi$ such that
    \[
        T(x) = \sum_{j=1}^n a_j^* x a_j
    \]
for all $x\in F$.
\end{lem}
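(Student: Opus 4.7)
The plan proceeds in two stages: first produce a Kraus representation whose operators are $\vphi$-eigenoperators (rather than arbitrary elements of $N$), and then absorb each eigenoperator into the centralizer $N^\vphi$ by multiplying with an isometry in $F^c$. Since $T\colon F\to N$ is a normal completely positive map with finite-dimensional domain, standard results give a finite Kraus representation $T(x) = \sum_{k=1}^m b_k^* x b_k$ with $b_k \in N$ to start from (for $F=M_n(\C)$ one may take $m=n$ via the positive square root of the Choi element).

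Decompose each $b_k$ into $\vphi$-eigenoperator components $b_k = \sum_\mu b_k^{[\mu]}$ using the idempotents $\E_{(\vphi,\mu)}$ from the point modular extension. For an eigenoperator $x \in F^{(\vphi|_F,\nu)}$, the element $(b_k^{[\mu]})^* x b_k^{[\mu']}$ lies in $N^{(\vphi,\mu'\nu/\mu)}$, while the modular covariance $\sigma_t^\vphi \circ T = T \circ \sigma_t^{\vphi|_F}$ forces $T(x) \in N^{(\vphi,\nu)}$. Applying $\E_{(\vphi,\nu)}$ to both sides of the Kraus identity kills the cross terms $\mu' \neq \mu$, and extending by linearity from eigenoperators yields $T(x) = \sum_{k,\mu}(b_k^{[\mu]})^* x b_k^{[\mu]}$ for all $x \in F$.

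To move each $b_k^{[\mu]}\in N^{(\vphi,\mu)}$ into the centralizer, use the decomposition $N = F\otimes F^c$ coming from $\vphi = \vphi|_F\otimes\vphi|_{F^c}$. By Lemma~\ref{lem:corners_of_ext_ap_weights_are_ext_ap}, $\vphi|_{F^c}$ is extremal almost periodic with $\Sd(\vphi|_{F^c}) = \Sd(\vphi)$, and the standard structure theory for extremal almost periodic states on factors provides, for each $\mu$ appearing above, an isometry $v_\mu \in (F^c)^{(\vphi|_{F^c},1/\mu)}$ with $v_\mu^*v_\mu = 1$. Setting $a_{k,\mu}:=(1\otimes v_\mu)b_k^{[\mu]}$, a direct computation in matrix units $\{e_{ij}\}$ of $F$ diagonalizing the density $h$ of $\vphi|_F$ (decomposing $b_k^{[\mu]} = \sum_{ij}e_{ij}\otimes c_k^{ij,\mu}$ with $c_k^{ij,\mu} \in (F^c)^{(\vphi|_{F^c},\mu h_j/h_i)}$ and noting $v_\mu c_k^{ij,\mu} \in (F^c)^{(\vphi|_{F^c}, h_j/h_i)}$) shows $a_{k,\mu}\in N^\vphi$. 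The crucial commutation $(1\otimes v_\mu^*)x = x(1\otimes v_\mu^*)$ for $x\in F$ together with $v_\mu^*v_\mu = 1$ gives $a_{k,\mu}^*xa_{k,\mu} = (b_k^{[\mu]})^*xb_k^{[\mu]}$, hence $T(x) = \sum_{k,\mu} a_{k,\mu}^* x a_{k,\mu}$ with $a_{k,\mu}\in N^\vphi$.

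The main obstacle I anticipate is finiteness of the resulting sum, since a priori each eigenoperator decomposition $b_k = \sum_\mu b_k^{[\mu]}$ involves countably many non-zero terms. This can be addressed by instead starting with the positive square root of the Choi element $\tilde T := \sum_{ij}e_{ij}\otimes T(e_{ij})\in M_n(N)_+$ taken inside the centralizer of the auxiliary state $\vphi|_F^{\mathrm{op}}\otimes \vphi$ on $F\otimes N$ (with $\vphi|_F^{\mathrm{op}}$ of density $h^{-1}/\Tr(h^{-1})$); a direct covariance calculation shows $\tilde T$ lies in this centralizer, so its square root does as well, and extracting Kraus operators from $\tilde T^{1/2}$ yields $b_k$'s whose eigenoperator decompositions are automatically finite. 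A secondary subtlety is the existence of the isometries $v_\mu$ in every eigenspace of $\vphi|_{F^c}$, which follows from factoriality of $(F^c)^{\vphi|_{F^c}}$ combined with the fact that in a separable factor each $\vphi$-eigenspace $(\vphi|_{F^c},\lambda)$ with $\lambda\in\Sd(\vphi|_{F^c})$ contains a unitary.
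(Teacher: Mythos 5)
Your overall strategy is sound and close in spirit to the paper's, but there is a genuine gap in the final step that you flag as a ``secondary subtlety'' and then dispose of incorrectly. You claim that for each $\mu$ there is an isometry $v_\mu \in (F^c)^{(\vphi|_{F^c},1/\mu)}$ with $v_\mu^* v_\mu = 1$, and you attribute this to the fact that ``each $\vphi$-eigenspace contains a unitary.'' Neither of these holds in general. By the KMS condition, any $v$ in a $\lambda$-eigenspace of an almost periodic state satisfies $\vphi(vv^*) = \lambda\,\vphi(v^*v)$, so a unitary can only live in the $\lambda=1$ eigenspace, and an isometry ($v^*v=1$) forces $\vphi(vv^*)=\lambda \leq 1$. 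Thus your $v_\mu$ with $v_\mu^* v_\mu=1$ in the $1/\mu$-eigenspace exists only when $\mu \geq 1$. Since the eigenvalues $\mu$ appearing in your decomposition of the Kraus operators come in reciprocal pairs, the case $\mu < 1$ unavoidably occurs, and the argument breaks down there.

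The correct fix, which is what the paper does, is to replace the single isometry by a \emph{finite} family of partial isometries $v^{(1)},\dots,v^{(m)} \in (F^c)^{(\vphi|_{F^c},1/\mu)}$ with $\sum_r (v^{(r)})^* v^{(r)} = 1$; the computation $a_{k,\mu}^{(r)*}\,x\,a_{k,\mu}^{(r)}$ summed over $r$ then reproduces $(b_k^{[\mu]})^*\,x\,b_k^{[\mu]}$ by exactly the commutation you use. Producing such a finite family requires a partitioning argument in $(F^c)^{\vphi|_{F^c}}$ (grouping an infinite family of partial isometries from \cite[Lemma 2.1]{GGLN} into finitely many blocks each of total trace at most $1/\mu$, and then using factoriality of the centralizer to sum each block into a single partial isometry). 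This is not a cosmetic point: without it, the sum cannot terminate in a finite Kraus representation.

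Regarding the finiteness issue you anticipate in the second stage: your proposed fix via the Choi element taken in the centralizer of $\vphi|_F^{\mathrm{op}}\otimes\vphi$ is correct and is essentially the paper's actual route. The paper sets $a = \sum_{i,j} T(e_{i,j})\otimes e_{i,j}$, verifies $a$ lies in the centralizer of $\vphi\otimes\psi$ with $\psi = \Tr(h^{-1}\,\cdot\,)$, takes $b=a^{1/2}$, and reads off finitely many Kraus operators $c_{i,j}$ each in a single eigenspace $N^{(\vphi,\mu_i/\mu_j)}$. Starting there rather than from an arbitrary Kraus representation avoids both the infinite eigenoperator decomposition of the $b_k$ and the unjustified rearrangement of a series converging only in $\|\cdot\|_\vphi$.
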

\begin{proof}
We can find a family of matrix units $\{e_{i,j}\in F\colon 1\leq i,j\leq d\}$ so that $\vphi|_F= \Tr(h\,\cdot\,)$ and
    \[
        h=\sum_{i=1}^d \mu_i e_{i,i},
    \]
where $\sum_i \mu_i=1$. Then $\{\frac{\mu_i}{\mu_j}\colon 1\leq i,j\leq d\}=\Sd(\vphi|_F) \subset \Sd(\vphi)$. The complete positivity of $T$ implies
    \[
        a:= \sum_{i,j=1}^d T(e_{i,j}) \otimes e_{i,j}
    \]
is a  positive operator in $N\otimes F$, so we can define $b:=a^{1/2}$. Then $b$ is of the form
    \[
        b= \sum_{i,j=1}^d b_{i,j}\otimes e_{i,j}
    \]
for  $b_{i,j}\in N$ and
    \[
        T(e_{i,j}) = (1\otimes \Tr)( a(1\otimes e_{j,i})) = (1\otimes \Tr)( b^*b(1\otimes e_{j,i})) = \sum_{k=1}^n b_{k,i}^*b_{k,j}.
    \]
If we set $c_{i,j}:= \sum_{k=1}^d e_{k,j}b_{i,k}$ for each $1\leq i,j\leq d$, then it follows that
    \[
        \sum_{i,j=1}^d c_{i,j}^* e_{k,\ell} c_{i,j} = \sum_{i,j,r,s=1}^d b_{i,r}^* e_{j,r} e_{k,\ell} e_{s,j}b_{i,s}  = \sum_{i,j=1}^d b_{i,r}^* e_{j,j} b_{i,s} = T(e_{k,\ell}).
    \]
Therefore
    \[
        T(x) = \sum_{i,j=1}^d c_{i,j}^* x c_{i,j}
    \]
for all $x\in F$. Next we will replace the $c_{i,j}$ with elements of the centralizer. Define a new state on $F$ by $\psi:= \Tr(h^{-1}\,\cdot\,)$. Then using $\sigma_t^\vphi\circ T = T\circ \sigma_t^{\vphi|_F}$ we have
    \[
        \sigma_t^{\vphi\otimes \psi}(a) = \sum_{i,j=1}^d \sigma_t^{\vphi}(T(e_{i,j})) \otimes \sigma_t^{\psi}(e_{i,j}) = \sum_{i,j=1}^d (\frac{\mu_i}{\mu_j})T(e_{i,j})\otimes \frac{\mu_j}{\mu_i}e_{i,j} = a.
    \]
Hence $a\in (N\otimes F)^{\vphi\otimes \psi}$ and therefore $b$ lies in this centralizer as well. Running a similar computation as above on the expanded formula for $b$ yields $\sigma_t^\vphi(b_{i,j}) = \frac{\mu_i}{\mu_j}b_{i,j}$ for all $i,j=1,\ldots, d$, and therefore
    \[
        \sigma_t^\vphi(c_{i,j}) = \sum_{k=1}^d \frac{\mu_k}{\mu_j}e_{k,j} \frac{\mu_i}{\mu_k} b_{i,k} = \frac{\mu_i}{\mu_j} c_{i,j}
    \]
for all $i,j=1,\ldots, d$.

Next, observe that $\vphi|_{F^c}$ is extremal almost periodic with $ \text{Sd}(\vphi|_F) \subset \text{Sd}(\vphi|_{F^c})$ by Lemma~\ref{lem:corners_of_ext_ap_weights_are_ext_ap}. We claim that for each $\frac{\mu_i}{\mu_j} \in \Sd(\vphi|_F)$ there exists a finite set $\mathcal{V}_{i,j}\subset (F^c)^{(\vphi|_{F^c},\frac{\mu_i}{\mu_j})}$ satisfying
    \[
        \sum_{v\in \mathcal{V}_{i,j}} vv^* = 1.
    \]
Indeed, we first use \cite[Lemma 2.1]{GGLN} to a find a (potentially infinite) family $\mathcal{W}\subset (F^c)^{(\vphi|_{F^c},\frac{\mu_i}{\mu_j})}$ satisfying
    \[
        \sum_{v\in \mathcal{W}} vv^* =1.
    \]
For $\frac{\mu_i}{\mu_j}\geq 1$, this same lemma implies we can chose $\mathcal{V}_{i,j}=\mathcal{W}$ of size one. Thus it suffices to consider the case $\frac{\mu_i}{\mu_j}<1$. For some $m\in\N$ we can partition $\mathcal{W}= \mathcal{W}_1\sqcup \cdots \sqcup \mathcal{W}_m$ so that
    \[
        \sum_{v\in \mathcal{W}_k} \vphi(vv^*) \leq \frac{\mu_i}{\mu_j}
    \]
for each $k=1,\ldots, m$. Consequently,
    \[
        \sum_{v\in \mathcal{W}_k} \vphi(v^*v) = \sum_{v\in \mathcal{W}_k} \frac{\mu_j}{\mu_i} \vphi(vv^*) \leq 1,
    \]
and so by precomposing the $v\in \mathcal{W}_k$ with suitable partial isometries  in $(F^c)^{\vphi|_{F^c}}$ we can assume the range and source projections are pairwise orthogonal over $\mathcal{W}_k$ (this uses the factoriality of $(F^c)^{\vphi|_{F^c}}$). Setting $v_k:= \sum_{v\in \mathcal{W}_k} v$ for each $k=1,\ldots, m$, the desired finite set is then $\mathcal{V}_{i,j}:=\{v_1,\ldots, v_m\}$.

Now, $v^*c_{i,j}\in N^\vphi$ for each $1\leq i,j\leq d$ and $v\in \mathcal{V}_{i,j}$, and since $F^c$ commutes with $F$ inside $N$ we have
    \[
        \sum_{i,j=1}^d \sum_{v\in \mathcal{V}_{i,j}} (v^* c_{i,j})^* x (v^*c_{i,j}) = \sum_{i,j=1}^d \sum_{v\in \mathcal{V}_{i,j}} c_{i,j}^* x vv^* c_{i,j} = \sum_{i,j=1}^d c_{i,j}^* x c_{i,j} = T(x)
    \]
for each $x\in F$. Thus $\{ v^*c_{i,j}\colon 1\leq i, j\leq d,\ v\in \mathcal{V}_{i,j}\}$ is the desired finite set in $N^\vphi$.
\end{proof}

The next pair of lemmas are relative Dixmier norm approximation results. These relate to \cite[Corollary 3.4 and Lemma 4.5]{Haa89}, respectively, but are necessarily weaker in the case that $\Sd(\vphi)$ is dense in $\R_+$ (see the proof of \cite[Theorem 3.1]{Haa89}).

\begin{lem}\label{lem:norm_Dixmier_on_eigenops}
Let $N$ be a diffuse factor with an extremal almost periodic state $\vphi$. For any finite set $x_1,\ldots, x_n\in N^{(\vphi,\eig)}$ and $\epsilon > 0$ there exists $\alpha\in \conv\{\Ad{u}\colon u\in N^\vphi\}$ satisfying
    \[
        \|\alpha(x_k) - \vphi(x_k)\| <\epsilon
    \]
for each $k=1,\ldots, n$.
\end{lem}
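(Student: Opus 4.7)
My plan is to reduce to single eigenoperators and then build $\alpha$ by composing averagings that handle one $x_k$ at a time. Since each $x_k = \sum_{\lambda\in E_k}\mathcal{E}_{(\vphi,\lambda)}(x_k)$ is a finite sum of eigenoperators, and $\vphi$ annihilates $N^{(\vphi,\lambda)}$ for $\lambda\neq 1$ (the identity $\vphi\circ\sigma_t^\vphi = \vphi$ forces $\lambda^{it}\vphi(x) = \vphi(x)$ for $x\in N^{(\vphi,\lambda)}$), one has $\vphi(x_k) = \vphi(\mathcal{E}_{(\vphi,1)}(x_k))$. Replacing the initial finite set by the collection of all its eigenoperator components and shrinking $\epsilon$ accordingly, I may assume each $x_k\in N^{(\vphi,\lambda_k)}$ for some $\lambda_k\in \Sd(\vphi)$.

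I then construct $\alpha = \alpha_n\circ \cdots\circ \alpha_1$ inductively, choosing $\alpha_k\in \conv\{\Ad u\colon u\in N^\vphi\}$ at the $k$th stage so that $\|\alpha_k\circ \cdots\circ \alpha_1(x_k) - \vphi(x_k)\cdot 1\| < \epsilon$. This incremental strategy succeeds because (i) $\conv\{\Ad u\colon u\in N^\vphi\}$ is closed under composition, since $\Ad u \circ \Ad v = \Ad(uv)$ and $N^\vphi$ is a group under multiplication; and (ii) each $\alpha_j$ is a unital contraction in operator norm that fixes scalars, so prior $\epsilon$-approximations persist under subsequent averagings. Writing $y := \alpha_{k-1}\circ \cdots\circ \alpha_1(x_k) \in N^{(\vphi,\lambda_k)}$, the task at stage $k$ is to approximate $y$ by $\vphi(x_k)\cdot 1$ in operator norm. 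When $\lambda_k = 1$ the element $y$ lies in $N^\vphi$, which is a type $\mathrm{II}_1$ factor by extremality of $\vphi$ and diffuseness of $N$, and the classical Dixmier property supplies the needed $\alpha_k$ (here $\vphi(y) = \vphi(x_k)$ because inner automorphisms by centralizer unitaries preserve $\vphi$).

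The main obstacle is the case $\lambda_k \neq 1$, in which $\vphi(x_k) = 0$ and one must show $0$ lies in the operator-norm closure of $\conv\{u^* y u\colon u\in N^\vphi\}$. For this I plan a strict-contraction iteration combining Kadison's inequality with the Dixmier property in $N^\vphi$. For any $y\in N^{(\vphi,\lambda)}$ with $\lambda\neq 1$, the modular/KMS relation gives $\vphi(yy^*) = \lambda\vphi(y^*y)$, which combined with $yy^*\leq \|y\|^2\cdot 1$ and $y^*y\leq \|y\|^2\cdot 1$ yields $\vphi(yy^*)\leq \min(1,\lambda)\|y\|^2$ and $\vphi(y^*y)\leq \min(1,1/\lambda)\|y\|^2$. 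Applying the Dixmier property in $N^\vphi$ to $yy^*$ (when $\lambda < 1$) or $y^*y$ (when $\lambda > 1$) produces $\beta\in \conv\{\Ad u\colon u\in N^\vphi\}$ making $\beta(yy^*)$ (resp.\ $\beta(y^*y)$) within $\eta$ in operator norm of its scalar $\vphi$-value. Kadison's inequality $\beta(y)\beta(y)^* \leq \beta(yy^*)$ (resp.\ $\beta(y)^*\beta(y)\leq \beta(y^*y)$) then gives
\[
    \|\beta(y)\|^2 \leq r\|y\|^2 + \eta, \qquad r:=\min(\lambda, 1/\lambda) < 1.
\]
Iterating this strict contraction finitely many times, with $\eta$ chosen so that the accumulated errors sum to at most $\epsilon^2/2$ and with enough iterations so that $r^m\|x_k\|^2 < \epsilon^2/2$, yields the desired $\alpha_k$ with $\|\alpha_k(y)\| < \epsilon$.
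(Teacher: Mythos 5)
Your proof is correct and follows essentially the same approach as the paper's: reduce to single eigenoperators, apply the Dixmier property in the $\mathrm{II}_1$ factor $N^\vphi$ for eigenvalue $1$, and iterate the resulting averagings over all components. For the $\lambda\neq 1$ case the paper simply cites \cite[Lemma~3.2]{Haa89}, and the Kadison-inequality/strict-contraction argument you give is precisely that cited argument unpacked.
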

\begin{proof}
We first observe that for $\lambda\in \R_+$ and $x\in N^{(\vphi,\lambda)}$ one has
    \[
        \vphi(x) \in \overline{\conv\{ u^{*}xu\, | \, u \in U(N_{\vphi)}\}}^{\|\cdot\|}.
    \]
Indeed, for $\lambda=1$ this is simply the Dixmier approximation theorem for type $\mathrm{II}_1$ factors (see \cite[Section III.5.1]{Dix81}), and for $\lambda\neq 1$ this follows from the same argument as in the proof of \cite[Lemma 3.2]{Haa89}. (Note that $N^\vphi$ is indeed a type $\mathrm{II}_1$ factor by \cite[Lemma 3.1]{GGLN}.)

Now, let $E\subset \Sd(\vphi)$ be the necessarily finite set such that $x_1,\ldots, x_n \in \text{span}\{N^{(\vphi,\lambda)}\colon \lambda\in E\}$, and for each $k=1,\ldots,n$ write $x_k = \sum_{\lambda\in E} x_{k,\lambda}$ where $x_{k,\lambda}\in N^{(\vphi,\lambda)}\cup\{0\}$. Iteratively applying our initial observation through some enumeration of $\{1,\ldots, n\}\times E$ yields $\alpha\in \conv\{\Ad{u}\colon u\in N^\vphi\}$  satisfying
    \[
        \|\alpha(x_{k,\lambda}) - \vphi(x_{k,\lambda}) \| < \frac{\epsilon}{|E|}
    \]
for each $k=1,\ldots, n$ and $\lambda \in E$, and so the claimed inequality for $x_k$ follows.
\end{proof}

\begin{lem}\label{lem:norm_Dixmier_approximation_of_expectation}
Let $N$ be a diffuse factor equipped with an extremal almost periodic state $\vphi$, and let $F\leq N$ be a finite dimensional subfactor for which
    \[
        \vphi= \vphi|_{F} \otimes \vphi|_{F^c}.
    \]
If $\E_F\colon N\to F$ is the $\vphi$-invariant conditional expectation, then for all $x\in N^{(\vphi,\eig)}$ and $\epsilon>0$ there exists $\alpha\in \conv\{\Ad{u}\colon u\in \mathcal{U}(F^c\cap N^\vphi)\}$ satisfying
    \[
        \| \alpha(x) - \E_F(x) \| <\epsilon.
    \]
\end{lem}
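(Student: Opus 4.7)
The plan is to exploit the tensor decomposition $N \cong F \otimes F^c$ afforded by the hypothesis and reduce the problem to applying Lemma~\ref{lem:norm_Dixmier_on_eigenops} inside $F^c$. First I would verify that $(F^c, \vphi|_{F^c})$ satisfies the hypotheses of that lemma. The factorization $\vphi = \vphi|_F \otimes \vphi|_{F^c}$ identifies $\E_F$ with the slice map $\text{id}_F \otimes \vphi|_{F^c}$ and identifies $F^c \cap N^\vphi$ with $(F^c)^{\vphi|_{F^c}}$. Applying Lemma~\ref{lem:corners_of_ext_ap_weights_are_ext_ap} with $P = F$ (a type $\mathrm{I}$ factor, and with the required semifiniteness and almost periodicity of $\vphi|_F$ automatic since $F$ is finite-dimensional) shows that $\vphi|_{F^c}$ is extremal almost periodic on $F^c$. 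Moreover $F^c$ is diffuse: a minimal projection $q \in F^c$ together with a minimal projection $e \in F$ would yield a minimal projection $e \otimes q \in N$, contradicting diffuseness of $N$.

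Next, I would decompose $x$ using matrix units. Choose matrix units $\{e_{i,j}\}_{i,j=1}^d$ in $F$ diagonalizing the density of $\vphi|_F$, with eigenvalues $\mu_i > 0$ as in Lemma~\ref{lem:centralizer_Kraus_rep}, and write $x = \sum_{i,j} e_{i,j} \otimes x_{i,j}$ with $x_{i,j} \in F^c$. Decomposing $x$ as a finite sum of $\vphi$-eigenoperators $x = \sum_\lambda x^{(\lambda)}$ and using that $\sigma_t^\vphi = \sigma_t^{\vphi|_F} \otimes \sigma_t^{\vphi|_{F^c}}$ acts by the scalar $(\mu_i/\mu_j)^{it}$ on $e_{i,j}\otimes 1$, one finds that the corresponding matrix coefficients satisfy $x^{(\lambda)}_{i,j} \in (F^c)^{(\vphi|_{F^c},\, \lambda\mu_j/\mu_i)} \cup \{0\}$. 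Consequently each $x_{i,j}$ is a finite sum of $\vphi|_{F^c}$-eigenoperators, so $x_{i,j} \in (F^c)^{(\vphi|_{F^c},\text{eig})}$. A direct slice computation gives $\E_F(x) = \sum_{i,j} \vphi|_{F^c}(x_{i,j})\,(e_{i,j} \otimes 1)$.

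Finally, I would apply Lemma~\ref{lem:norm_Dixmier_on_eigenops} in the diffuse factor $F^c$ to the finite family $\{x_{i,j}\} \subset (F^c)^{(\vphi|_{F^c},\text{eig})}$ with tolerance $\epsilon/d^2$, producing $\alpha_0 \in \conv\{\Ad{u}\colon u \in \mathcal{U}((F^c)^{\vphi|_{F^c}})\}$ with
\[
\|\alpha_0(x_{i,j}) - \vphi|_{F^c}(x_{i,j})\| < \epsilon/d^2 \qquad (1 \leq i,j \leq d).
\]
Setting $\alpha := \text{id}_F \otimes \alpha_0$ and noting that for $u \in \mathcal{U}((F^c)^{\vphi|_{F^c}})$ one has $1 \otimes u \in \mathcal{U}(F^c \cap N^\vphi)$ with $\Ad{(1 \otimes u)} = \text{id}_F \otimes \Ad{u}$, the convex structure transfers to place $\alpha \in \conv\{\Ad{v}\colon v \in \mathcal{U}(F^c \cap N^\vphi)\}$. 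The triangle inequality together with $\|e_{i,j} \otimes y\| = \|y\|$ then yields
\[
\|\alpha(x) - \E_F(x)\| = \left\| \sum_{i,j} e_{i,j} \otimes \bigl(\alpha_0(x_{i,j}) - \vphi|_{F^c}(x_{i,j})\bigr) \right\| \leq \sum_{i,j} \|\alpha_0(x_{i,j}) - \vphi|_{F^c}(x_{i,j})\| < \epsilon,
\]
as required. The substantive analytic input—the Dixmier-type norm approximation—has already been packaged into Lemma~\ref{lem:norm_Dixmier_on_eigenops}; the only minor subtlety here is verifying that $F^c$ inherits diffuseness and extremal almost periodicity from $N$, after which everything is a bookkeeping exercise through the tensor factorization.
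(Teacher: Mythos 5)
Your proposal is correct and follows essentially the same route as the paper: decompose $x$ via matrix units into $F^c$-coefficients, observe these lie in $(F^c)^{(\vphi|_{F^c},\text{eig})}$, invoke Lemma~\ref{lem:corners_of_ext_ap_weights_are_ext_ap} to get extremal almost periodicity of $\vphi|_{F^c}$, then apply Lemma~\ref{lem:norm_Dixmier_on_eigenops} in $F^c$ and transfer back via the tensor factorization. The only (cosmetic) differences are that you aggregate the eigenoperator decomposition into the matrix coefficients $x_{i,j}$ before applying Lemma~\ref{lem:norm_Dixmier_on_eigenops}---which is fine since that lemma accepts elements of $N^{(\vphi,\text{eig})}$, not just eigenoperators, and lets you use $\epsilon/d^2$ rather than $\epsilon/(d^2|E|)$---and that you explicitly verify $F^c$ is diffuse, a hypothesis the paper leaves implicit.
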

\begin{proof}
For $x\in N^{(\vphi,\eig)}$ write
    \[
        x= \sum_{\lambda\in E} x_\lambda,
    \]
where $x_\lambda\in N^{(\vphi,\lambda)}$ for each $\lambda$ in a finite subset $E\subset \Sd(\vphi)$. Fix a family of matrix units $\{e_{i,j}\colon 1\leq i,j\leq d\}$ for $F$  consisting of eigenoperators for $\sigma^{\vphi|_F}$. In fact, $\vphi=\vphi|_F\otimes \vphi|_{F^c}$ implies they will also be eigenoperators for $\sigma^\vphi$, say $\sigma_t^\vphi(e_{i,j}) = \lambda_{i,j}^{it} e_{i,j}$ for $\lambda_{i,j}\in \Sd(\vphi)$. For each $\lambda\in E$ write
    \[
        x_\lambda = \sum_{i,j=1}^d e_{i,j} x_{i,j}^{(\lambda)},
    \]
where $x_{i,j}^{(\lambda)}\in F^c$ for all $1\leq i,j\leq d$. Our choice of matrix units and $x_\lambda\in N^{(\vphi,\lambda)}$ implies $x_{i,j}^{(\lambda)}\in N^{(\vphi, \frac{\lambda}{\lambda_{i,j}})}$ for each $1\leq i ,j\leq d$. Also, the tensor decomposition of $\vphi$ implies
    \[
        \E_F(x_\lambda) = \sum_{i,j=1}^d e_{i,j} \vphi(x_{i,j}^{(\lambda)}).
    \]
Lemma~\ref{lem:corners_of_ext_ap_weights_are_ext_ap} tells us that $\vphi|_{F^c}$ is extremal almost periodic, and so we can apply Lemma~\ref{lem:norm_Dixmier_on_eigenops} to $\{x_{i,j}^{(\lambda)}\colon \lambda\in E,\ 1\leq i,j\leq d\} \subset F^c$ to find $\alpha\in \conv\{\Ad{u}\colon u\in \mathcal{U}(F^c\cap N^\vphi)\}$ satisfying
    \[
        \|\alpha(x_{i,j}^{(\lambda)}) - \varphi(x_{i,j}^{(\lambda)})\| < \frac{\epsilon}{d^2|E|}.
    \]
(Note that $F^c\cap N^\vphi = (F^c)^{\vphi|_{F^c}}$.) Since $F^c$ commutes with $F$ we have
    \[
        \| \alpha(x) - \E_F(x)\| = \left\| \sum_{\lambda\in E} \sum_{i,j=1}^d e_{i,j}\left[\alpha(x_{i,j}^{(\lambda)}) - \vphi(x_{i,j}^{(\lambda)})\right] \right\| < \epsilon,
    \]
as claimed.
\end{proof}

The following theorem is our main result in this section and corresponds to \cite[Theorem 4.1]{Haa89}. Recall that Lemma~\ref{lem:centralizer_Kraus_rep} gave an exact Kraus representation of the completely positive map $T(x) = \sum_{j=1}^n a_j^* x a_j $ for $a_1,\ldots, a_n \in N^\vphi$. Since $T$ is unital, we in particular have that $\sum_{j=1}^n a_j^* a_j=1$. However, $\sum_{j=1}^n a_j a_j^*=1$ may fail to hold, and it will be needed in the proof of Theorem~\ref{thm:A} to apply \cite[Theorem 2.3]{Haa89}. In the following theorem, we obtain this latter relation by sacrificing our exact finite Kraus representation for an infinite approximate Kraus representation.

\begin{thm}\label{thm:unital_but_infinite_Kraus_approximation}
Let $N$ be a diffuse factor equipped with an extremal almost periodic state $\vphi$, and let $F\leq N$ be a finite dimensional subfactor for which
    \[
        \vphi= \vphi|_{F} \otimes \vphi|_{F^c}
    \]
Suppose $T\colon F\to N$ is a normal unital completely positive map satisfying $\vphi\circ T= \vphi|_F$ and $\sigma_t^{\vphi}\circ T = T \circ \sigma_t^{\vphi|_F}$. Then for every $\delta>0$ there exists a sequence $(a_n)_{n\in \N}\subset N^\vphi$ such that
    \[
        \sum_{n=1}^\infty a_n^* a_n = \sum_{n=1}^\infty a_n a_n^*=1
    \]
and
    \[
        \left\| T(x) - \sum_{n=1}^\infty a_n^* x a_n  \right\| \leq \delta \|x\|
    \]
for all $x\in F$.
\end{thm}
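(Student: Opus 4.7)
The idea is to refine the exact finite Kraus representation from Lemma~\ref{lem:centralizer_Kraus_rep} into a countable one that exactly satisfies both unital conditions $\sum a_n^* a_n = \sum a_n a_n^* = 1$, using Dixmier averaging by unitaries in $(F^c)^\vphi$ and a corrective sequence in the $\II_1$ factor $N^\vphi$.

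First, Lemma~\ref{lem:centralizer_Kraus_rep} gives $T(x)=\sum_{j=1}^m b_j^* x b_j$ with $b_j\in N^\vphi$; unitality of $T$ forces $\sum_j b_j^* b_j=1$. Set $e:=\sum_j b_j b_j^*\in (N^\vphi)_+$. Since $b_j\in N^\vphi$ (so that $\vphi(b_j^* x b_j)=\vphi(x b_j b_j^*)$ by the trace property on the centralizer), the identity $\vphi\circ T=\vphi|_F$ translates to $\vphi(xe)=\vphi(x)$ for $x\in F$. Running this through the $\vphi$-preserving conditional expectation $\mathcal{E}_F\colon N\to F$ and invoking faithfulness of $\vphi|_F$ forces $\mathcal{E}_F(e)=1$.

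Fix $\epsilon>0$. Applying Lemma~\ref{lem:norm_Dixmier_approximation_of_expectation} to $e\in N^{(\vphi,\eig)}$ produces convex weights $\lambda_1,\ldots,\lambda_K$ and unitaries $u_1,\ldots,u_K\in \mathcal{U}((F^c)^\vphi)$ such that $\|f-1\|<\epsilon$, where $f:=\sum_i\lambda_i u_i e u_i^*$. Define $\tilde b_{j,i}:=\sqrt{\lambda_i}\,u_i b_j\in N^\vphi$. Since each $u_i$ commutes with $F$, direct computations give $\sum_{j,i}\tilde b_{j,i}^*\tilde b_{j,i}=1$, $\sum_{j,i}\tilde b_{j,i}\tilde b_{j,i}^*=f$, and $\sum_{j,i}\tilde b_{j,i}^* x\tilde b_{j,i}=T(x)$ exactly for $x\in F$.

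Rescaling by $\beta^2:=(1+\epsilon)^{-1}$, one has $\beta^2\leq 1$ and $\beta^2 f\leq \beta^2\|f\|\cdot 1\leq 1$, so $(1-\beta^2)\cdot 1$ and $1-\beta^2 f$ are positive elements of $N^\vphi$ sharing trace $1-\beta^2$ (using $\tau(f)=\tau(e)=1$). Since $N^\vphi$ is a $\II_1$ factor by \cite[Lemma 3.1]{GGLN}, a standard construction via spectral decomposition and partial isometries inside $N^\vphi$ yields a sequence $(c_n)_{n\in\N}\subset N^\vphi$ with $\sum_n c_n^* c_n=(1-\beta^2)\cdot 1$ and $\sum_n c_n c_n^*=1-\beta^2 f$. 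Let $(a_n)$ be the concatenation of $(\beta\tilde b_{j,i})_{j,i}$ with $(c_n)_n$: both total sums equal $1$ exactly, and for $x\in F$
\[
    \Bigl\| T(x)-\sum_n a_n^* x a_n \Bigr\| \leq (1-\beta^2)\|T(x)\| + \Bigl\| \sum_n c_n^* x c_n \Bigr\| \leq 2(1-\beta^2)\|x\|\leq 2\epsilon\|x\|,
\]
using that $T$ is contractive and that the normal completely positive map $x\mapsto\sum_n c_n^* x c_n$ has norm $\|\sum_n c_n^* c_n\|=1-\beta^2$. Setting $\epsilon:=\delta/2$ yields the required bound.

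I expect the main technical work to lie in the final step, namely realising two positive operators with equal trace as the column and row sums of a single countable family inside the $\II_1$ factor $N^\vphi$; the remaining ingredients are largely routine applications of results already established in the paper.
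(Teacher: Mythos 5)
Your proposal is correct and follows essentially the same route as the paper: apply Lemma~\ref{lem:centralizer_Kraus_rep} to get an exact Kraus decomposition in $N^\vphi$, observe $\mathcal{E}_F(\sum b_j b_j^*)=1$, Dixmier-average by unitaries in $F^c\cap N^\vphi$ via Lemma~\ref{lem:norm_Dixmier_approximation_of_expectation} so the row sum gets uniformly close to $1$, then rescale slightly and pad the family inside the $\mathrm{II}_1$ factor $N^\vphi$ to force both sums to equal $1$ exactly, finally estimating the error of the padded Kraus map via its norm $\|S\|=\|S(1)\|$. The only cosmetic difference is the choice of scaling constant ($\beta^2=(1+\epsilon)^{-1}$ versus $1-\delta/2$). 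The one step you flag as the ``main technical work'' --- realising two positive operators of equal trace in $N^\vphi$ as the respective column and row sums of a single countable family --- is precisely what the paper handles by invoking \cite[Lemma 5.1]{Haa85}; you would want to cite that (or prove it) rather than call it standard, but your identification of the gap is accurate and the rest of the argument is complete.
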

\begin{proof}
We first apply Lemma~\ref{lem:centralizer_Kraus_rep} to obtain $b_1,\ldots, b_n\in N^\vphi$ so that
    \[
        T(x) = \sum_{i=1}^n b_i^* x b_i,
    \]
for all $x\in F$. In particular, this gives
    \[
        \sum_{i=1}^n b_i^* b_i = T(1) = 1.
    \]
Let $\E_F\colon N\to F$ to be $\sigma^\vphi$-preserving conditional expectation. Using $\vphi\circ T = \vphi|_F$ and $b_i\in N^\vphi$, we have for $x\in F$
    \[
        \< \E_F\left(\sum_{i=1}^d b_i b_i^* \right), x\>_\vphi = \< \sum_{i=1}^d b_i b_i^*, x\>_\vphi = \< 1, \sum_{i=1}^d b_i^* x b_i\>_\vphi = \<1, T(x)\>_\vphi = \<1, x\>_\vphi.
    \]
Hence $\E_F(\sum_{i=1}^d b_i b_i^*) = 1$.

Now, apply Lemma~\ref{lem:norm_Dixmier_approximation_of_expectation} to $\sum_{i=1}^d b_i b_i^*$ to find $\alpha = \sum_{j=1}^{d} t_j \Ad{u_j}\in \conv\{ \Ad{u}\colon u\in \mathcal{U}(F^c\cap N^\vphi)\}$ satisfying
    \[
        \left\| \alpha\left(\sum_{i=1}^n b_ib_i^* \right) -1 \right\| < \frac{\delta}{2}.
    \]
Set $c_{i,j}:=t_j^{\frac12}u_j b_i$. Then the above inequality gives
    \begin{align}\label{eqn:Kraus_operator_norm_bound}
        \| \sum_{i=1}^n \sum_{j=1}^d c_{i,j} c_{i,j}^* - 1 \| < \frac{\delta}{2}.
    \end{align}
We also have
    \[
        \sum_{i=1}^n \sum_{j=1}^d  c_{i,j}^*c_{i,j} = \sum_{j=1}^d t_j \sum_{i=1}^n b_i^* b_i = 1,
    \]
and for $x\in F$ we have
    \[
        \sum_{i=1}^n \sum_{j=1}^d c_{i,j}^* x c_{i,j} = \sum_{j=1}^d t_j \sum_{i=1}^n b_i^* x b_i = T(x).
    \]
Reindex the operators $c_{i,j}$ as $c_1,\ldots, c_m$ for some $m\in \N$, and then set
    \[
        a_i:= \left( 1- \frac{\delta}{2} \right)^{\frac12} c_i.
    \]
for each $i=1,\ldots, m$. Then
    \[
        \sum_{i=1}^m a_i^* a_i = 1 - \frac{\delta}{2} < 1,
    \]
and (\ref{eqn:Kraus_operator_norm_bound}) implies
    \[
        \sum_{i=1}^m a_i a_i^*  = \left[\sum_{i=1}^m a_i a_i^* - \left( 1-\frac{\delta}{2}\right) \right] + \left( 1-\frac{\delta}{2}\right)  < \left( 1-\frac{\delta}{2}\right) \left( \frac{\delta}{2}\left( 1-\frac{\delta}{2}\right)   + 1\right) < 1.
    \]
Recalling that $N^\vphi$ is a type $\mathrm{II}_1$ factor by \cite[Lemma 3.1]{GGLN}, we can use \cite[Lemma 5.1]{Haa85} to find $\{ a_i\in N^\vphi\colon i\geq m+1\}$ so that
    \[
        \sum_{i=1}^\infty a_i^* a_i = \sum_{i=1}^\infty a_i a_i^* =1.
    \]
Note that $\sum_{i\geq m+1} a_i^*a_i = \frac{\delta}{2}$, and consequently
    \[
        S(x):= \sum_{i=m+1}^\infty a_i^* x a_i
    \]
is a completely positive map with $\|S\| = \| S(1)\| = \frac{\delta}{2}$. Thus $\|S(x)\|\leq \frac{\delta}{2}\|x\|$, and recall that $\|T(x)\|\leq \|x\|$ since $\|T\|=\|T(1)\| = 1$. Hence
    \[
        \left\| T(x) - \sum_{i=1}^\infty a_i^* x a_i \right\| = \left\| T(x) - \left( 1-\frac{\delta}{2}\right) T(x) - S(x) \right\| = \left\| \frac{\delta}{2}T(x) - S(x)\right\| \leq \delta\|x\|
    \]
for all $x\in F$.
\end{proof}

\section{Proof of Theorem~\ref{thm:A}}\label{sec:proof_thmA}

Let $N$ be a separable injective factor equipped with an extremal almost periodic state $\vphi$. For $\Lambda:=\Sd(\vphi)$, let $(R_\Lambda, \vphi_\Lambda)$ be as in (\ref{eqn:def_of_R_Lambda}). Towards constructing the isomorphism $(N,\vphi)\cong (R_\Lambda, \vphi_\Lambda)$, we first prove three claims corresponding to \cite[Lemmas 6.2, 6.3, and 6.4]{Haa89}, respectively.\\

{\leftskip 15 pt
\noindent\textbf{Claim 1:} For each finite set $u_1,\ldots, u_n\in \mathcal{U}(N)$, finite set $E_0\subset \Sd(\vphi)$, and $\delta>0$ there exists a finite set $E_0\subset E\subset \Sd(\vphi)$, $m\in \N$, a completely positive map $T\colon \mathbb{M}_E^{\otimes m}\to N$, and unitarty operators $v_1,\ldots, v_n\in \mathbb{M}_E^{\otimes m}$ such that
    \begin{align*}
        \vphi\circ T &= \phi_E^{\otimes m}\\
        \sigma_t^\vphi\circ T &= T \circ \sigma_t^{\phi_E^{\otimes m}}
    \end{align*}
for all $t\in \R$, and
    \begin{align*}
        \|T(v_k) - u_k\|_\vphi < \delta
    \end{align*}
for each $k=1,\ldots, n$.

}

\hfill

\noindent Choose $\epsilon>0$ satisfying $(2\epsilon)^{\frac12} + \epsilon <\delta$. Applying Theorem~\ref{thm:modular_finite_dimensional_approximations} to $u_1,\ldots, u_n\in N$, $E_0$, and this $\epsilon$ we obtain a finite set $E_0\subset E\subset \Sd(\vphi)$, $m\in \N$, and normal unital completely positive maps $S\colon N\to \mathbb{M}_E^{\otimes m}$ and $T\colon \mathbb{M}_E^{\otimes m}\to N$ with their collection of properties. Set $y_k:=S(u_k)$ so that $\|y_k\|\leq 1$ and
    \[
        \|T(y_k) - u_k \|_\vphi < \epsilon
    \]
for each $k=1,\ldots, n$. Observe that
    \[
        \|y_k\|_{\phi_E^{\otimes m}} \geq \|T(y_k)\|_\vphi \geq \|u_k\|_\vphi - \| T(y_k) - u_k\|_\vphi > 1-\epsilon.
    \]
Let $y_k = v_k |y_k|$ be the polar decomposition, and since $\mathbb{M}_E^{\otimes m}$ is a finite factor we can extend $v_k$ to a unitary in this algebra. Thus
    \[
        \| v_k - y_k\|_{\phi_E^{\otimes m}}^2 = \| 1- |y_k|\|_{\phi_E^{\otimes m}}^2 \leq 1- \| |y_k| \|_{\phi_E^{\otimes m}}^2 = 1 - \| y_k\|_{\phi_E^{\otimes m}}^2 < 1- (1-\epsilon)^2 < 2\epsilon,
    \]
where in the first inequality we have used $|y_k|^2 + (1 - |y_k|)^2 \leq 1$, which follows from $0\leq |y_k|\leq \| y_k\|\leq 1.$ Consequently we have
    \[
        \| T(v_k) - u_k \|_\vphi \leq \|T(v_k - y_k)\|_\vphi + \|T(y_k) - u_k\|_\vphi < (2\epsilon)^{\frac12} + \epsilon < \delta,
    \]
by our original assumption on $\epsilon$. This proves Claim 1.\\

{\leftskip 15pt
\noindent\textbf{Claim 2:} For each finite set $u_1,\ldots, u_n\in \mathcal{U}(N)$, finite set $E_0\subset \Sd(\vphi)$, and $\delta>0$ there exists a finite dimensional subfactor $F\leq N$, unitary operators $v_1,\ldots, v_n\in \mathcal{U}(F)$, and a sequence $(a_i)_{i\in \N}\subset N^\vphi$ satisfying
    \begin{enumerate}[leftmargin=5em, label=(\roman*)]
        \item $\vphi = \vphi|_F\otimes \vphi|_{F^c}$;

        \item $(F,\vphi|_F)\cong (\mathbb{M}_E^{\otimes m}, \phi_E^{\otimes m})$ for some finite set $E_0\subset E\subset \Sd(\vphi)$ and $m\in \N$;
        
        \item $\sum_{i=1}^\infty a_i^* a_i = \sum_{i=1}^\infty a_ia_i^*=1$;
        
        \item $\sum_{i=1}^\infty \| v_k a_i - a_i u_k\|_\vphi^2 < \delta$ for each $k=1,\ldots, n$.
    \end{enumerate}

}

\hfill

\noindent First, we apply Claim 1 to $u_1,\ldots, u_n$, $E_0$, and $\frac{\delta}{4}$ and let $E_0\subset E\subset \Sd(\vphi)$, $m\in \N$, $T\colon \mathbb{M}_E^{\otimes m}\to N$, and $v_1,\ldots, v_n\in \mathbb{M}_E^{\otimes m}$ the resulting data. Then applying Lemma~\ref{lem:modular_embedding_matrix_algebra} to $(\mathbb{M}_E^{\otimes m}, \phi_E^{\otimes m})$ gives a finite dimensional subfactor $F\leq N$ satisfying (i) and (ii).

Observe that identifying $(\mathbb{M}_E^{\otimes m}, \phi_E^{\otimes m})$ with $(F,\vphi|_F)$ gives $\vphi\circ T = \vphi|_F$ and $\sigma_t^\vphi\circ T = T\circ \sigma_t^{\vphi|_F}$ for all $t\in \R$. Thus we can apply
Theorem~\ref{thm:unital_but_infinite_Kraus_approximation} to $F$, $T$, and $\frac{\delta}{4}$ to obtain $(a_i)_{i\in \N}\subset N^\vphi$ satisfying (iii) in Claim 2 and
    \[
        \left\|T(x) - \sum_{i=1}^\infty a_i^* x a_i \right\| < \frac{\delta}{4} \|x\|
    \]
for all $x\in F$. Using this as well as the proximity of $T(v_k)$ and $u_k$ given by Claim 1 we have
    \[
        \left\|u_k - \sum_{i=1}^\infty a_i^* v_k a_i \right\|_\vphi \leq \|u_k - T(v_k)\|_\vphi + \left\| T(v_k) - \sum_{i=1}^\infty a_i^* v_k a_i\right\|_\vphi < \frac{\delta}{2}.
    \]
We also compute
    \[
        \sum_{i=1}^\infty \|a_i u_k\|_\vphi^2 = \sum_{i=1}^\infty \vphi(u_k^* a_i^* a_i u_k)= \vphi(u_k^* u_k) = 1,
    \]
and
    \[
        \sum_{i=1}^\infty \|v_k a_i\|_\vphi^2 = \sum_{i=1}^\infty \vphi(a_i^* a_i) = 1.
    \]
Putting all of this together we obtain
    \begin{align*}
        \sum_{i=1}^\infty \|v_ka_i - a_k u_k\|_\vphi^2 &= 2 - 2 \sum_{i=1}^\infty \Re \vphi(u_k^* a_i^* v_k a_i)\\
        &= 2 \Re \vphi\left(u_k^*\left( u_k - \sum_{i=1}^\infty a_i^* v_k a_i \right) \right) \leq 2 \left\| u_k - \sum_{i=1}^\infty a_i^* v_k a_i \right\|_\vphi < \delta.
    \end{align*}
This proves (iv) and hence all of Claim 2.\\

{\leftskip 15pt
\noindent\textbf{Claim 3:} For each finite set $x_1,\ldots, x_n\in N$, finite set $E_0\subset \Sd(\vphi)$, and $\epsilon>0$ there exists a finite dimensional subfactor $F\leq N$ and a finite set $y_1,\ldots, y_n\in F$ satisfying:
    \begin{enumerate}[leftmargin=5em, label=(\roman*)]
        \item $\vphi=\vphi|_F\otimes \vphi|_{F^c}$;

        \item $(F,\vphi|_F)\cong (\mathbb{M}_E^{\otimes m}, \phi_E^{\otimes m})$ for some finite set $E_0\subset E\subset \Sd(\vphi)$ and $m\in \N$;

        \item $\| x_k - y_k\|_\vphi < \epsilon$ for each $k=1,\ldots, n$.
    \end{enumerate}
    
}

\hfill

\noindent Decomposing the $x_i$ into a linear combintation of four unitaries, it suffices to consider $u_1,\ldots, u_n\in \mathcal{U}(N)$. Apply Claim 2 to $u_1,\ldots, u_n$, $E_0$, and some $\delta>0$ to be determined later, and let $G\leq N$ and $(a_i)_{i\in \N}\subset  N^\vphi$ be the resulting data. The $N$-$N$-bimodule structure of $L^2(N,\vphi)$ restricts to an $N^\vphi$-$N^\vphi$-bimodule structure satisfying
    \[
        a\cdot x \cdot b = a J_\vphi b^* J_\vphi x= axb
    \]
for $a,b\in N^\vphi$ and $x\in N$. (Indeed, first check this for $x\in N^{(\vphi,\eig)}$ for which $J_\vphi x = \sigma_{-i/2}^{\vphi}(x^*)$.) Claim 2 therefore tells us that the tuples $(v_1,\ldots, v_n), (u_1,\ldots, u_n)\in L^2(N,\vphi)^{\oplus n}$ are $\delta$-related over $N^\vphi$ in the sense of \cite[Definition 2.1]{Haa89}. So given $\epsilon>0$, if we take $\delta>0$ sufficiently small then \cite[Theorem 2.3]{Haa89} yields a unitary $w\in \mathcal{U}(N^\vphi)$ satisfying
    \[
        \| u_k - w^* v_k w\|_\vphi = \| w u_k - v_k w\|_\vphi <\epsilon
    \]
for each $k=1,\ldots, n$. Define $F:= w^* G w$ and $y_k= w^* v_k w$ for each $k=1,\ldots,n$ so that (iii) holds. Since $w$ is in the centralizer for $\vphi$, we have 
    \[
        \vphi|_F\otimes \vphi|_{F^c} = (\vphi|_G\otimes \vphi|_{G^c})\circ( \Ad{w^*}\otimes \Ad{w^*}) = \vphi|_G\otimes \vphi|_{G^c}=\vphi,
    \]
so that (i) holds. Also
    \[
        (F,\vphi|_F)\cong (G,\vphi|_G)\cong (\mathbb{M}_E^{\otimes m}, \phi_E^{\otimes m}),
    \]
for some finite set $E_0\subset E\subset \Sd(\vphi)$ and $m\in \N$ so that (ii) and hence Claim 3 holds.\\

We now begin the proof of our main theorem in earnest. Using separability of $N$ we write a countable generating set $\{x_n\in N\colon n\in \N\}$ for $N$. Also enumerate $\Sd(\vphi)\cap (0,1)=\{\lambda_n\colon n\in \N\}$. We will inductively construct a sequence of commuting finite dimensional subfactors $(F_n)_{n\in \N}$ of $N$ and a sequence of finite subsets $(E_n)_{n\in \N}$ of $\Sd(\vphi)$ satisfying:
    \begin{enumerate}[label=(\alph*)]
        \item $\inf \{ \|x_k - y\|_\vphi \colon y\in F_1\vee \cdots \vee F_n\} < \frac1n$ for each $n\in \N$ and each $k=1,\ldots, n$;

        \item $\vphi= \vphi|_{F_n}\otimes \vphi|_{F_n^c}$ for each $n\in \N$;

        \item $\{\lambda_1,\ldots, \lambda_n\} \subset E_n \subset E_{n+1}$;

        \item for each $n\in \N$ there exists $m_n\in \N$ so that
            \[
                (F_n,\vphi|_{F_n})\cong (\mathbb{M}_{E_n}^{\otimes m_n}, \phi_{E_n}^{\otimes m_n}).
            \]
    
    \end{enumerate}
$F_1$ and $E_1$ are obtained by applying Claim 3 to $x_1$, $E_0=\{1\}$, and $\epsilon=1$. Suppose we have constructed $F_1,\ldots, F_n$ as above. Set
    \[
        F:=F_1\vee \cdots \vee F_n = \bigotimes_{k=1}^n F_k.
    \]
Then $F$ is $\sigma^\vphi$-invariant since (b) implies this is true for each $F_1,\ldots, F_n$, and hence $\vphi= \vphi|_F \otimes \vphi|_{F^c}$. Fix a family of matrix units $\{e_{i,j}\in F\colon 1\leq i,j\leq d\}$ so that for each $k=1,\ldots, n+1$
    \[
        x_k = \sum_{i,j=1}^d e_{i,j} x^{(k)}_{i,j}
    \]
where $x^{(k)}_{i,j}\in F^c$. Set
    \[
        K:= \sum_{i,j=1}^d \|e_{i,j}\|_\vphi.
    \]
Recall that $\vphi|_{F^c}$ is extremal almost periodic with $\Sd(\vphi|_{F^c})=\Sd(\vphi)$ by Lemma~\ref{lem:corners_of_ext_ap_weights_are_ext_ap}, so we can apply Claim 3 to $(F^c,\vphi|_{F^c})$, the finite set $\{x^{(k)}_{i,j}\colon 1\leq k\leq n+1,\ 1\leq i,j\leq d\}$, the finite set $\{\lambda_{n+1}\}\cup E_n$, and $\epsilon=\frac{1}{(n+1)K}$ to obtain: a finite subset $E_{n+1}\subset \Sd(\vphi)$ satisfying (c); a finite dimensional subfactor $F_{n+1}\leq F^c$ satisfying $\vphi|_{F^c} = \vphi|_{F_{n+1}} \otimes \vphi|_{F_{n+1}'\cap F^c}$ as well as (d); and a finite set $\{y^{(k)}_{i,j}\in F_{n+1}\colon 1\leq k\leq n+1\, 1\leq i,j\leq d\}$ satisfying
    \[
        \| x_{i,j}^{(k)} - y_{i,j}^{(k)} \|_\vphi < \frac{1}{(n+1)K^{1/2}}
    \]
for each $k=1,\ldots, n+1$ and $1\leq i,j\leq d$. Note that $F_{n+1}$ being a subfactor of $F^c$ implies it commutes with $F_1,\ldots, F_n$. Composing the expectations from $N$ to $F^c$ and from $F^c$ to $F_{n+1}$ gives (b). Towards establishing (a), set
    \[
        y_k:= \sum_{i,j=1}^d e_{i,j} y_{i,j}^{(k)}
    \]
for each $k=1,\ldots, n+1$ so that $y_k\in F\vee F_{n+1} = F_1\vee \cdots \vee F_{n+1}$ and using (b) we have
    \begin{align*}
        \| x_k - y_k\|_\vphi^2 &= \vphi\left( \sum_{i,j=1}^d e_{i,j} \sum_{r=1^d} (x_{\ell, i}^{(k)} - y_{\ell,i}^{(k)})^* (x_{\ell, j}^{(k)} - y_{\ell,j}^{(k)})  \right) \\
        &= \sum_{i,j,r=1}^d \vphi(e_{i,j})\vphi( (x_{\ell, i}^{(k)} - y_{\ell,i}^{(k)})^* (x_{\ell, j}^{(k)} - y_{\ell,j}^{(k)}) )\\
        &\leq \sum_{i,j=1} \|e_{i,j}\|_\vphi \frac{1}{(n+1)^2 K} \leq \frac{1}{(n+1)^2},
    \end{align*}
and so (a) follows. Induction therefore gives us the claimed sequence $(F_n)_{n\in\N}$ of commuting subfactors $N$ and the sequence $(E_n)_{n\in \N}$ of finite subsets of $\Sd(\vphi)$.

Now, if $\E_n$ is the $\vphi$=preserving conditional expectation onto $F_1\vee \cdots \vee F_n$ then for each $k\leq n$ we have
    \[
        \| x_k - \E_n(x_k)\|_\vphi \leq \frac{1}{n}
    \]
by (a). That is, the bounded sequence $(\E_n(x_k))_{n\in \N}$ converges to $x_k$ in $\|\,\cdot\,\|_\vphi$-norm and therefore in the strong operator topology. Consequently,
    \[
        \bigvee_{n\in \N} F_n = \left( \bigcup_{n\in \N} F_n\right)''
    \]
contains $x_k$ for all $k\in \N$. Since $\{x_k\colon k\in \N\}$ is a generating set for $N$, we see that the above must in fact equal $N$. Further, (b) implies $\vphi$ restricted to $F_1\vee \cdots \vee F_n$ conincides with
    \[
        \bigotimes_{k=1}^n \vphi|_{F_k}
    \]
for each $n\in \N$. Consequently, (c) implies
    \[
        (N,\vphi) \cong \bigotimes_{n\in \N} (F_n, \vphi|_{F_n}) \cong \bigotimes_{n\in \N} (\mathbb{M}_{E_n}^{\otimes m_n}, \phi_{E_n}^{\otimes m_n}).
    \]
The inclusions $\{\lambda_1,\ldots, \lambda_{n+1}\}\subset E_n \subset E_{n+1}$ implies that $(M_2(\C), \phi_{\lambda_n})$ appears infinite often in the above tensor product for each $n\in \N$. Thus
    \[
        (N,\vphi)\cong \bigotimes_{\Lambda\cap (0,1)} (R_\lambda, \vphi_\lambda) = (R_{\Lambda}, \vphi_{\Lambda}),
    \]
as claimed.$\hfill\square$

\section{Proof of Theorem~\ref{thm:C}}\label{section:Cuntz_algebras}

Using either \cite[Theorem 4.7]{Izu93} when $J$ is finite or \cite[Example 1]{Tho19} when $J$ is infinte, it follows that $\pi_\vphi(\mathcal{O}_J)''$ is a separable injective type $\mathrm{III}$ factor. Thus by Theorem~\ref{thm:A} it suffices to show $\vphi$ is extremal almost periodic with $\Sd(\vphi)=\Lambda$. The generators of $\mathcal{O}_J$ being eigenoperators with respect to $\vphi$ implies the almost periodicity of $\vphi$, and moreover that $\Sd(\vphi)\subset \Lambda$. Since $\Sd(\vphi)$ contains generators of the group $\Lambda$, we will obtain $\Sd(\vphi)=\Lambda$ once we know the former is a group. Therefore the proof will be complete once we have argued that $\vphi$ is extremal, and this is essentially already contained in the proof of \cite[Theorem 5]{Tho19}.

Indeed, \cite[Theorem 5]{Tho19} asserts that the \emph{Connes spectrum} (see \cite[Section XI.2]{Tak03}) of the modular automorphism group of $\sigma^\vphi$ is given by
    \[
        \Gamma(\sigma^\vphi) = \overline{\Lambda}.
    \]
In fact, for each projection $q$ in the center of $(\pi_\vphi(\mathcal{O}_J)'')^\vphi$ and each $j\in J$, in the proof of \cite[Theorem 5]{Tho19} Thomsen exhibits a partial isometry $w\in (\pi_\vphi(\mathcal{O}_J)'')^{(\vphi,\mu(j))}$ with $q wq\neq 0$. It follows that
    \[
        \mu(J)\subset \Sd(\vphi|_{ q \pi_\vphi(\mathcal{O}_J)'' q})
    \]
for all such $q$. Equip $\Lambda$ with the discrete topology, let $G$ be its compact dual, and $G\overset{\alpha}{\curvearrowright} \pi_\vphi(\mathcal{O}_J)''$ be the point modular extension of the modular automorphism group of $\vphi$. Then the above, in conjunction with \cite[Equation (1)]{Con74}, shows that
    \[
        \mu(J) \subset \text{Sp}( \alpha^{(q)}),
    \]
where $\alpha^{(q)}$ is the restriction of $\alpha$ to $qMq$ and $\text{Sp}(\alpha^{(q)})$ is its \emph{Arveson spectrum} (see \cite[Section XI.1]{Tak03}). Using \cite[Proposition 2.2.2]{Con73} we see that
    \[
        \mu(J)\subset \bigcap_{q} \text{Sp}( \alpha^{(q)})= \Gamma(\alpha).
    \]
But since the Connes spectrum of $\alpha$ is a subgroup of $\Lambda$ it follows that $\Gamma(\alpha)=\Lambda$. This in turn implies $(\pi_\vphi(\mathcal{O}_J)'')^\alpha = (\pi_\vphi(\mathcal{O}_J)'')^\vphi$ is a factor by \cite[Theorem 2.4.1]{Con73}. That is, $\vphi$ is extremal. $\hfill\square$

\bibliographystyle{amsalpha}
\bibliography{references}

\end{document}